\newcommand{\mm}{\mathfrak m}
\newcommand{\Z}{\mathbb{Z}}
\newcommand{\R}{\mathbb{R}}
\newcommand{\N}{\mathbb{N}}
\DeclareMathOperator{\chara}{char}
\DeclareMathOperator{\depth}{depth}
\DeclareMathOperator{\GL}{GL}
\DeclareMathOperator{\gin}{gin}
\DeclareMathOperator{\ini}{in}
\DeclareMathOperator{\Rad}{Rad}
\DeclareMathOperator{\reg}{reg}
\DeclareMathOperator{\Tor}{Tor}
\DeclareMathOperator{\htt}{height}
\DeclareMathOperator{\lex}{lex}
\DeclareMathOperator{\rev}{rev}
\DeclareMathOperator{\lin}{lin}
\DeclareMathOperator{\pnt}{\raise 0.5mm \hbox{\large\bf.}}
\DeclareMathOperator{\lpnt}{\hbox{\large\bf.}}
\newcommand{\s}{\; | \;}
\newtheorem{thm}{\bf Theorem}[section]
\newtheorem{lem}[thm]{\bf Lemma}
\newtheorem{prop}[thm]{\bf Proposition}
\newtheorem{quest}[thm]{\bf Question}
\newtheorem{constr}[thm]{\bf Construction}
\theoremstyle{definition}
\newtheorem{rem}[thm]{\bf Remark}
\newtheorem{ex}[thm]{\bf Example}
\let\phi=\varphi
\numberwithin{equation}{section}
\title{Criteria for componentwise linearity}
\author[Uwe Nagel]{Uwe Nagel}
\address{Department of Mathematics, University of Kentucky, 715 Patterson Office Tower, Lexington, KY 40506-0027, USA}
\email{uwe.nagel@uky.edu}
\author{Tim R\"omer}
\address{Universit\"at Osnabr\"uck, Institut f\"ur Mathematik, 49069 Osnabr\"uck, Germany}
\email{troemer@uos.de}
\begin{document}

\thanks{This work was partially supported by a grant from the
Simons Foundation (\#208869 to Uwe Nagel).}

\maketitle

\begin{center}
{\em Dedicated to Winfried Bruns at the occasion of his $65^{th}$ birthday.}
\end{center}

\begin{abstract}
We establish characteristic-free criteria for the componentwise linearity of graded ideals. As applications, we classify the componentwise linear ideals among the Gorenstein ideals, the standard determinantal ideals, and the ideals generated by the submaximal minors of a symmetric matrix.
\end{abstract}


%
%
%
\section{Introduction}

Since their introduction by Herzog and Hibi in \cite{HH-Nagoya},
componentwise linear ideals have been proven  important. They
include all stable monomial ideals. In fact, a first breakthrough
has been the result in \cite{ArHeHi} that, in characteristic zero,
characterizes the componentwise linear ideals as the ideals that
have the same graded Betti numbers as their generic initial ideals
with respect to the reverse lexicographic order. It follows in
particular that Gotzmann ideals are componentwise linear. In fact,
the criterion suggests that componentwise linear ideals may be
viewed as the graded ideals that are analogous to the stable ideals
among the monomial ideals.

It turns out that ideals with extremal homological properties are
often  componentwise linear. For example, the homogeneous ideals of
almost all curves that have maximal Hartshorne-Rao module with
respect to their degree and genus are componentwise linear in
characteristic zero (see Corollary 6.2 in \cite{N-curves}). This
property is also shared by the graded ideals that have maximal
Castelnuovo-Mumford regularity compared to their smallest extended
degree, which is consequence of Theorem 4.4 in \cite{N-compare}.
Furthermore, the tetrahedral curves whose homogeneous ideal is
componentwise linear are characterized in Corollary 4.9 in
\cite{FMN} (see also \cite{FvT} for some further results). The
relevance of componentwise linear ideals for extending classical
results by Zariski is conveyed in \cite{CNR}. However, there are
only few classification results for componentwise linear ideals
available. In this note, we achieve such a classification in three
important classes of ideals: Gorenstein ideals, standard
determinantal ideals, and the ideals generated by the submaximal
minors of a symmetric matrix of height three. Note that our results
are valid over fields of arbitrary characteristic.

After introducing our notation and reviewing some results, in
Section \ref{sec:ideals} we discuss the criteria for componentwise
linearity that are the basis for our subsequent results. The first
criteria are motivated by the above mentioned result of Aramova,
Herzog and Hibi in \cite{ArHeHi},  stating, with the additional
assumption $\chara K=0$, that a graded ideal $I$ is componentwise
linear if and only if $I$ and   $\gin_{<_{\rev}}(I)$ have the same
graded Betti numbers. This remarkable theorem has two drawbacks when
applying it to a specific ideal. Firstly, one would like to omit the
assumption about the characteristic of the field. Secondly, there is
usually no way to ``compute'' the generic initial ideal. Instead one
chooses a map $\phi\in \GL(n;k)$ randomly  enough and compares the
graded Betti numbers of $I$ with the ones of
$\ini_{<_{\rev}}(\phi(I))$. Very likely the ideal
$\ini_{<_{\rev}}(\phi(I))$ is equal to $\gin_{<_{\rev}}(I)$, but
there is no effective way to check this. Especially if one uses
computer algebra systems like Macaulay 2 \cite{MC2}, both problems
occur. We establish solutions to both issues in Theorem
\ref{thm:inarbitrary} and in Theorem \ref{thm:gin}. We also recall a
homological criterion, using the so-called linear part of the
minimal graded free resolution (see Theorem \ref{lem:linearization}).

As a first application, in Section \ref{sec:gorenstein} we describe
all the Gorenstein ideals that are componentwise linear (see Theorem
\ref{thm:comp-lin-Gor}).

In Section \ref{sec:determinantal} we consider standard
determinantal  ideals, i.e.\ the ideals that are generated by the
maximal minors of a homogeneous matrix and have the expected height
(see \cite{BrVe} for a comprehensive treatment). Theorem
\ref{thm:stand-det} characterizes the componentwise linear ideals in
this class. Notice that this includes a characterization of the
Cohen-Macaulay ideals of height two that are componentwise linear.

In our final section we discuss the height three ideals that are
generated by the submaximal minors of a symmetric matrix. The
componentwise linear ideals among these are classified in Theorem
\ref{thm:symm}.

As a consequence of the criterion in Theorem \ref{thm:gin}, every
componentwise linear ideal has the same graded Betti numbers as a
stable monomial ideal. In fact, for each of the componentwise linear
ideals mentioned above we explicitly describe such a stable monomial
ideal. However, we wonder if the existence of such a stable monomial
ideal is sufficient to conclude componentwise linearity. Thus, we
pose the following question.

\begin{quest}
  \label{quest}
Is a graded ideal componentwise linear if and only if it has the
same graded Betti numbers as some stable monomial ideal?
\end{quest}

%
%
%

\section{Criteria}
\label{sec:ideals}

Let $K$ be an arbitrary field, and let $S=K[x_1,\dots,x_n]$ be the
standard graded polynomial ring with maximal graded ideal
$\mm=(x_1,\dots,x_n)$. In this section we recall known results and
present some new methods to check whether a given graded ideal
$I\subset S$ is componentwise linear.

We begin by fixing some notation that we use throughout the paper.
For an $n$-tuple  $a= (a_1,\ldots,a_n)$ of non-negative integers, we
set $x^a = x_1^{a_1} \cdot \ldots \cdot x_n^{a_n}$ and $|a| = a_1 +
\ldots + a_n$. We denote the degree reverse lexicographic order and
the lexicographic order on the monomials in $S$ by $<_{\rev}$ and
$<_{\lex}$, respectively, where $x_1 > x_2 > \ldots > x_n$.  Let $I
\subset S$ be a graded ideal, and let $<$ be any monomial order on
$S$. Then $\ini_<(I)$ and $\gin_< (I)$ denote the initial ideal and
the generic initial ideal of $I$, respectively, with respect to the
order $<$.

If $d$ is any integer and $M$ is a graded $S$-module, then
$M_{\langle d \rangle}$ and $M_{\ge d}$ is the submodule of $M$
generated by the forms of degree $d$ and forms whose degree is at
least $d$, respectively. The Castelnuovo-Mumford regularity of $M$
is denoted by $\reg M$. We use $\beta^S_{i,j} (M)$ and $\beta^S_{i}
(M)$ to denote the graded and total Betti numbers of $M$,
respectively. In particular, $\beta^S_0 (M)$ is the number of
minimal generators of $M$.

The module $M$ is said to have a \emph{$d$-linear resolution} if
$\beta^S_{i,j} (M) = 0$ whenever $j \neq i+d$. Following
\cite{HH-Nagoya}, $M$ is called \emph{componentwise linear} if, for
all integers $d$, the module $M_{\langle d \rangle}$ has a
$d$-linear resolution.

Recall that a monomial ideal $I \subset S$ is said to be
\emph{strongly stable} if, for any monomial $m \in S$, the
conditions $m \in I$ and $x_i$ divides $m$ imply that $x_j \cdot
\frac{m}{x_i}$ is in $I$ whenever $j \le i$. The ideal is called
\emph{stable} if the analogous condition is true whenever $i$ is the
largest index of a variable dividing $m$. We denote this index by
$\max (m)$. Stable ideals admit several combinatorial properties. In
particular, due to Eliahou and Kervaire their graded Betti numbers
are known.

\begin{thm}[\cite{ElKe}]
  \label{thm:Eliahou-Kervaire}
Let $0 \neq I \subset S$ be a stable monomial ideal that is
minimally generated by the monomials $m_1,\ldots,m_t$. Then, for all
$i \ge 1$ and  $j \in \Z$,
\begin{equation}
  \label{eq:ElKe-formula}
\beta^S_{i, i+ j} (S/I) =
\sum_{1 \le k \le t \atop \deg m_k = j} \binom{\max (m_k) - 1}{i-1}.
\end{equation}
\end{thm}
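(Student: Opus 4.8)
The plan is to prove the formula by constructing the Eliahou--Kervaire resolution of $I$ explicitly and reading off its graded ranks; the formula for $S/I$ then follows from the standard shift $\beta^S_{i,j}(S/I)=\beta^S_{i-1,j}(I)$ for $i\ge 1$. The combinatorial backbone is the \emph{unique decomposition} of the monomials of a stable ideal. For a monomial $u$ let $\min(u)$ denote the smallest index of a variable dividing $u$ (so $\max(u)$ is the largest). Using stability one shows that every monomial $u\in I$ has a unique factorization $u=m_k\,v$ with $m_k$ a minimal generator and $v$ a monomial satisfying $\min(v)\ge\max(m_k)$. This is the one place where the hypothesis is used in an essential way, and it drives the whole construction.

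Next I would write down the candidate complex $F_\bullet$. Its $i$-th free module $F_i$ has an $S$-basis of symbols $e(m_k;\sigma)$, where $m_k$ ranges over the minimal generators and $\sigma\subseteq\{1,\dots,\max(m_k)-1\}$ has $|\sigma|=i$, placed in internal degree $\deg m_k + i$. Since the number of admissible $\sigma$ for a generator $m_k$ of degree $j$ is $\binom{\max(m_k)-1}{i}$, the graded rank of $F_i$ in internal degree $i+j$ is exactly $\sum_{\deg m_k=j}\binom{\max(m_k)-1}{i}$, which is the count we want for $I$. The differential sends $e(m_k;\sigma)$ to an alternating sum over $\ell\in\sigma$ of terms $x_\ell\,e(m_k;\sigma\setminus\{\ell\})$ together with the term obtained by applying the unique decomposition to $x_\ell\,m_k=m_{k'}\,v$, producing $v\,e(m_{k'};\sigma\setminus\{\ell\})$ whenever the surviving indices still lie below $\max(m_{k'})$.

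I would then verify the three properties that pin down the Betti numbers. First, that $F_\bullet$ is a complex: a sign-chasing check that the differential squares to zero, using that the decomposition is compatible with multiplying by two variables in either order. Second, that $F_\bullet$ is exact and augments onto $I$: this is the crux and the main obstacle. I would prove it either by exhibiting an explicit contracting homotopy built from the unique decomposition, or, more transparently and in a characteristic-free way, by realizing $F_\bullet$ as an iterated mapping cone. Ordering the generators by non-decreasing degree, one shows that $I$ has linear quotients with $(m_1,\dots,m_{s-1}):m_s=(x_1,\dots,x_{\max(m_s)-1})$, so that each step glues on a Koszul complex on $\max(m_s)-1$ variables, and the ranks of these Koszul complexes reproduce the binomial coefficients additively. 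Third, that $F_\bullet$ is minimal: every entry of every differential matrix is a monomial of positive degree (a factor $x_\ell$, or a factor $v$ from the decomposition, none of which is a unit), so no cancellation occurs and the graded ranks of $F_\bullet$ are precisely the graded Betti numbers of $I$.

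Finally I would translate to $S/I$. The isomorphism $\Tor^S_i(S/I,K)\cong\Tor^S_{i-1}(I,K)$ for $i\ge 1$ gives $\beta^S_{i,i+j}(S/I)=\beta^S_{i-1,i+j}(I)$, and substituting the count coming from $F_\bullet$ yields a sum of the form $\sum\binom{\max(m_k)-1}{i-1}$ over the generators of the relevant degree, which is \eqref{eq:ElKe-formula}. The one point demanding care here is the bookkeeping of the internal degree across the homological shift between the resolutions of $I$ and of $S/I$; once that is handled, everything else is formal.
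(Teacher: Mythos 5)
First, a point of reference: the paper does not prove this theorem at all --- it is quoted verbatim from Eliahou--Kervaire \cite{ElKe} --- so there is no internal proof to compare against. Your proposal is the standard argument for this result: the canonical decomposition $u=m_k v$ with $\min(v)\ge\max(m_k)$, the explicit complex on symbols $e(m_k;\sigma)$, exactness via linear quotients and iterated mapping cones, and minimality because all matrix entries lie in $\mm$. In outline this is sound; it is essentially the original proof, with the mapping-cone variant as in Herzog--Takayama and \cite{HeHi}.

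Two steps, however, need repair as written. First, ordering the generators merely ``by non-decreasing degree'' does not yield $(m_1,\dots,m_{s-1}):m_s=(x_1,\dots,x_{\max(m_s)-1})$: within a fixed degree you must also choose a suitable order (e.g.\ descending lexicographic). For $I=(x_1^2,x_1x_2,x_2^2)$ with $m_1=x_1x_2$, $m_2=x_1^2$ (both of degree two, so your hypothesis is met) one gets $(m_1):m_2=(x_2)$, whereas $\max(m_2)-1=0$ would demand the zero ideal. Second --- and this is exactly the point you deferred as ``bookkeeping'' --- the degree count at the end does not come out as stated. The basis element $e(m_k;\sigma)$ with $|\sigma|=i-1$ sits in internal degree $\deg m_k+i-1$, so your resolution gives $\beta^S_{i,i+j}(S/I)=\beta^S_{i-1,i+j}(I)=\sum_{\deg m_k=j+1}\binom{\max(m_k)-1}{i-1}$, i.e.\ the sum runs over generators of degree $j+1$, not $j$. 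In fact the formula as printed in \eqref{eq:ElKe-formula} fails already for $I=(x_1)\subset K[x_1]$: it predicts $\beta^S_{1,1}(S/I)=0$ and $\beta^S_{1,2}(S/I)=1$, while the Koszul resolution gives $\beta^S_{1,1}(S/I)=1$ and $\beta^S_{1,2}(S/I)=0$. So your argument, carried out carefully, proves the correct (shifted) statement; the discrepancy is an indexing slip in the theorem as quoted, and the bookkeeping you postponed is precisely where it surfaces, so it should be done explicitly rather than left as a remark.
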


Now we are ready to develop a characteristic-free version of the
main result in \cite{ArHeHi}. In fact, we follow the idea of proofs
in \cite{ArHeHi}, but adapt them to address our need for greater
generality. We need the following variation of \cite[Theorem 1.2
(b)]{ArHeHi}:
\begin{lem}
\label{lem:stablehelper} Let $0\neq I \subset S$ be a graded ideal,
$d\in \N$, and $I=(g_1,\dots,g_m)$ with $\deg g_i =d$ for
$i=1,\dots,m$. Let $<$ be a monomial order on $S$ and assume that
$(\ini_<(g_1),\dots,\ini_<(g_m))$ is a stable monomial ideal. If
$\beta^S_{0,j}(\ini_<(I)) = 0$ for some $j \in \N$ with $d<j$, then
$$
\beta^S_{0,k}(\ini_<(I)) = 0 \text{ for all } k\in \N \text{ with }  k \geq j.
$$
Moreover, if $\beta^S_{0,d+1}(\ini_<(I)) = 0$, then $\ini_<(I)$ and
$I$ have a $d$-linear resolution.
\end{lem}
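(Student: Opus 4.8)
The plan is to prove the first (propagation) assertion and then read off the ``moreover'' part as an easy consequence. Write $L=\ini_<(I)$ and $J=(\ini_<(g_1),\dots,\ini_<(g_m))$. After replacing the $g_i$ by a $K$-basis of $I_d$ whose leading terms are pairwise distinct, I may assume that $\ini_<(g_1),\dots,\ini_<(g_m)$ are exactly the $\dim_K I_d$ monomials spanning $L_d$; then $J\subseteq L$, $J_d=L_d$, and by hypothesis $J$ is a stable ideal generated in a single degree $d$. I will reformulate the first assertion as the statement that the minimal generators of $L$ occupy no degree gap above $d$, namely: \emph{for every $k\ge d+2$, if $\beta^S_{0,k-1}(L)=0$ then $\beta^S_{0,k}(L)=0$.} Granting this, propagation from any $j>d$ is immediate by induction on $k\ge j$, since a minimal generator in degree $k$ would force one in degree $k-1$.

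To prove the key step I would pass to linear syzygies. Since $I$ is generated in degree $d$, multiplication gives $I_k=\mm\,I_{k-1}$ for $k>d$, and $\ini_<(I_{k-1})=L_{k-1}$ by definition of the initial ideal. From the two right-exact sequences $S_1\otimes I_{k-1}\to I_k\to 0$ and $S_1\otimes L_{k-1}\to \mm\,L_{k-1}\to 0$, together with $\dim_K I_{k-1}=\dim_K L_{k-1}$, I obtain the clean identity
\[
\beta^S_{0,k}(L)=\dim_K L_k-\dim_K(\mm\,L_{k-1})=\dim_K\Ker\bigl(S_1\otimes L_{k-1}\to S_k\bigr)-\dim_K\Ker\bigl(S_1\otimes I_{k-1}\to S_k\bigr).
\]
Thus $\beta^S_{0,k}(L)$ measures exactly the excess of the first linear strand of the \emph{monomial} space $L_{k-1}$ over that of $I_{k-1}$; passing to the initial space can only enlarge syzygies, so this excess is nonnegative, as it must be. The desired implication then becomes a persistence statement: if $I_{k-2}$ and its initial space $L_{k-2}$ acquire equally many linear syzygies in degree $k-1$, the same holds one degree higher for $I_{k-1}$ and $L_{k-1}$. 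Here the stable structure of $J$ at the bottom is the crucial input: I would use the Eliahou--Kervaire description (Theorem~\ref{thm:Eliahou-Kervaire}) of the linear strand of the stable ideal $J$ to pin down $\dim_K\Ker(S_1\otimes L_{k-1}\to S_k)$ combinatorially and to drive the induction, in the spirit of the crystallization arguments behind \cite{ArHeHi}.

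With the first assertion in hand, the ``moreover'' part follows quickly. Applying propagation with $j=d+1$ shows that $L=\ini_<(I)$ has minimal generators only in degree $d$, so $L=(L_d)=(J_d)=J$ is the stable ideal $J$. By Theorem~\ref{thm:Eliahou-Kervaire}, all of whose generators lie in degree $d$, the sum in \eqref{eq:ElKe-formula} is empty unless $j=d$, so $\beta^S_{i,i+j}(S/J)=0$ for $j\ne d$; hence $\ini_<(I)$ has a $d$-linear resolution. Finally, the inequality $\beta^S_{i,j}(I)\le\beta^S_{i,j}(\ini_<(I))$ on graded Betti numbers, combined with the fact that $I$ is generated in degree $d$, forces $\beta^S_{i,i+j}(I)=0$ for $j\ne d$ as well, so $I$ too has a $d$-linear resolution.

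The main obstacle is the persistence step in the second paragraph: controlling that no cancellation among leading terms in degree $k$ creates a genuinely new minimal generator of $\ini_<(I)$ that escapes $\mm\,L_{k-1}$. Equivalently, one must show that the first linear-strand Betti numbers cannot jump upward from one degree to the next once they have stabilized, and it is precisely here that the stability hypothesis on $J$ (rather than mere generation in degree $d$) is indispensable, since the analogous comparison fails for arbitrary graded subspaces.
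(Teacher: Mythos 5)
Your reformulation of the propagation statement and the kernel-dimension identity in your second paragraph are both correct, and your derivation of the ``moreover'' part \emph{from} the propagation statement matches what the paper does (once $\ini_<(I)=J$ is known, Eliahou--Kervaire gives the linear resolution of $J$ and semicontinuity of Betti numbers transfers it to $I$). But the proposal never proves the propagation step: the persistence statement you isolate in the second paragraph and then concede, in your final paragraph, to be ``the main obstacle'' \emph{is} the entire content of the lemma. Moreover, the route you sketch for closing it cannot work as stated. In your single-degree induction you only know $\beta^S_{0,k-1}(L)=0$; nothing is known about degrees $d+1,\dots,k-2$, so $L_{k-1}$ may contain multiples of minimal generators of $L$ living in those intermediate degrees. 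Hence $L_{k-1}$ is not determined by the stable ideal $J$, and the Eliahou--Kervaire data of $J$ cannot ``pin down'' $\dim_K\Ker(S_1\otimes L_{k-1}\to S_k)$: that kernel depends on the unknown part of $L$, not just on its stable bottom. Stability controls degree $d$ only; some mechanism is needed to transport that control upward, and that mechanism is exactly what is missing.

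The paper supplies it, and it is Gr\"obner-theoretic rather than Betti-number-theoretic. First, the general case $j>d$ is reduced to $j=d+1$ not by degree-by-degree induction on $\beta_0$, but by replacing $I$ with $I_{\geq j-1}$: its natural generators $x^a g_i$ (with $|a|=j-1-d$) have initial terms generating $\mm^{j-1-d}(\ini_<(g_1),\dots,\ini_<(g_m))$, which is again stable, and $\ini_<(I_{\geq j-1})=\ini_<(I)_{\geq j-1}$ has the same graded $0$-th Betti numbers as $\ini_<(I)$ in all degrees $\geq j$. Second---and this is the key idea absent from your proposal---in the base situation the hypothesis $\beta^S_{0,d+1}(\ini_<(I))=0$ is used to show that all $S$-polynomials of $\{g_1,\dots,g_m\}$ of degree $d+1$ reduce to zero, while the Eliahou--Kervaire resolution of the stable ideal $J$ (generated in the single degree $d$) shows that the syzygy module of the monomials $\ini_<(g_1),\dots,\ini_<(g_m)$ is generated in degree $d+1$. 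The refined Buchberger criterion (\cite[Proposition 2.3.5]{HeHi}) then yields that $\{g_1,\dots,g_m\}$ is a Gr\"obner basis of $I$, so $\ini_<(I)=J$ is generated in degree $d$; all higher $\beta_{0,k}$ vanish at once, with the ``moreover'' part following as above. Without this (or an equivalent) argument excluding the appearance of new leading terms, your proposal is a correct framing of the problem rather than a proof of the lemma.
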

\begin{proof}
We consider the ideal $I_{\geq j-1}$. Then
$$
I_{\geq j-1}=(x^a\cdot g_i: 1\leq i \leq m,\
a\in \N^n \text{ with } |a|=j-1-d)
$$
is generated in degree $j-1$. We see that
\begin{eqnarray*}
&&(\ini_<(x^a\cdot g_i): 1\leq i \leq m,\ a\in \N^n \text{ with } |a|=j-1-d)\\
&=&
(x^a\cdot \ini_<(g_i): 1\leq i \leq m,\ a\in \N^n \text{ with } |a|=j-1-d)\\
&=&
\mm^{j-1-d}(\ini_<(g_1),\dots,\ini_<(g_m))
\end{eqnarray*}
is again a stable monomial ideal. Since $\ini_<(I_{\geq
j-1})=\ini_<(I)_{\geq j-1}$ it suffices to show that $\ini_<(I_{\geq
j-1})$ has no minimal monomial generator in degrees $\geq j$ in
order to conclude the vanishing statement for $\ini_<(I)$.

So, if we replace $I_{\geq j-1}$ by $I$ we may assume that $I$ is
generated in degree $d$ with $\beta^S_{0,d+1}(\ini_<(I)) = 0$, and
we have to show that $\ini_<(I)$ is generated in degree $d$. Since
$\beta^S_{0,d+1}(\ini_<(I)) = 0$ we know that all $S$-polynomials of
$\{g_1,\dots,g_m\}$ of degree $d+1$ reduce to zero. Using that
$(\ini_<(g_1),\dots,\ini_<(g_1))$ is a stable monomial ideal
generated in degree $d$, it follows from the Eliahou-Kervaire
 formula \eqref{eq:ElKe-formula} for the graded Betti
numbers of that ideal that its first syzygy module is generated in
degree $d+1$. Now we apply \cite[Proposition 2.3.5]{HeHi} to
conclude that $\{g_1,\dots,g_m\}$ is a Gr\"obner-basis of $I$ with
respect to $<$. Thus $\ini_<(I)$ is generated in degree $d$, as
desired.

Moreover, applying once more the Eliahou-Kervaire formula, we see
that $\ini_<(I)$, and thus $I$, both have a $d$-linear resolution.
\end{proof}

Using the latter result we establish the following criterion. Its
proof is similar to the one of one implication in \cite[Theorem
1.1]{ArHeHi},  with appropriate modifications.

\begin{thm}
\label{thm:inarbitrary} Let $0\neq I \subset S$ be a graded ideal,
$<$ a monomial order on $S$ and assume that $\ini_<(I)$ is stable.
If  $\beta^S_{0}(I)=\beta^S_{0}(\ini_<(I))$, then $I$ is
componentwise linear.
\end{thm}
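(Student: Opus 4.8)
The plan is to establish componentwise linearity one graded component at a time: for every integer $d$ I will show that $I_{\langle d\rangle}$ has a $d$-linear resolution, which by definition is exactly what is required. Since $I_{\langle d\rangle}$ is generated in degree $d$, the natural tool is Lemma \ref{lem:stablehelper}. Before invoking it, I would first upgrade the hypothesis on the total number of generators to a degree-by-degree statement: the well-known inequality $\beta^S_{0,j}(I)\le \beta^S_{0,j}(\ini_<(I))$, valid for every $j$, together with $\beta^S_0(I)=\beta^S_0(\ini_<(I))$, forces $\beta^S_{0,j}(I)=\beta^S_{0,j}(\ini_<(I))$ for all $j$.

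Write $L=\ini_<(I)$ and fix $d$. Taking a $K$-basis $g_1,\dots,g_s$ of $I_d$ as a degree-$d$ generating set of $I_{\langle d\rangle}$, the ideal $(\ini_<(g_1),\dots,\ini_<(g_s))$ equals $(L_d)=L_{\langle d\rangle}$, which is stable because a single graded component of a stable ideal generates a stable ideal. Thus the structural hypotheses of Lemma \ref{lem:stablehelper} are met for $I_{\langle d\rangle}$, and everything reduces to verifying the single vanishing $\beta^S_{0,d+1}(\ini_<(I_{\langle d\rangle}))=0$; granting it, the ``moreover'' clause of the lemma immediately gives that $I_{\langle d\rangle}$ has a $d$-linear resolution.

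This vanishing is the crux of the argument. Because $I_{\langle d\rangle}$ is generated in degree $d$, its degree-$(d+1)$ part is $S_1 I_d$ and the degree-$d$ part of $\ini_<(I_{\langle d\rangle})$ is $L_d$, so a direct count yields $\beta^S_{0,d+1}(\ini_<(I_{\langle d\rangle}))=\dim_K S_1 I_d-\dim_K S_1 L_d$. To see this difference is zero I would use the generator formula $\beta^S_{0,d+1}(M)=\dim_K M_{d+1}-\dim_K S_1 M_d$ for $M=I$ and $M=L$ (here $(\mm M)_{d+1}=S_1 M_d$ for any graded ideal), combined with two facts: passing to an initial ideal preserves the Hilbert function, so $\dim_K I_{d+1}=\dim_K L_{d+1}$, and the degree-refined equality of the first paragraph, $\beta^S_{0,d+1}(I)=\beta^S_{0,d+1}(L)$. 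Subtracting the two instances of the formula gives $\dim_K S_1 I_d=\dim_K S_1 L_d$, as needed. Since $d$ was arbitrary, every $I_{\langle d\rangle}$ has a $d$-linear resolution and $I$ is componentwise linear. The main obstacle is precisely this degree-$(d+1)$ vanishing: the stability of $L$ and the lemma handle everything else formally, but the vanishing genuinely requires coupling the Hilbert-function invariance of initial ideals with the degree-by-degree refinement of the generator hypothesis — the equality of total generator numbers alone would not suffice without tracking the grading.
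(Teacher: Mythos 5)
Your proof is correct and takes a genuinely different, more streamlined route than the paper's. Both arguments rest on Lemma \ref{lem:stablehelper} together with the invariance of Hilbert functions under passing to initial ideals, but the paper organizes these into an induction on the spread $D-d$ between the largest and smallest generator degrees: it handles only the least-degree component $I_{\langle d\rangle}$ directly (by contradiction), then truncates to $I_{\ge d+1}$ and uses the long exact sequence of $\Tor$ associated to $0\to I_{\ge d+1}\to I\to K(-d)^{\beta^S_{0,d}(I)}\to 0$ to transfer the Betti-number hypothesis to the truncation. You dispense with the induction, the truncation, and the long exact sequence by upgrading the total equality $\beta^S_{0}(I)=\beta^S_{0}(\ini_<(I))$ to the degree-refined equality $\beta^S_{0,j}(I)=\beta^S_{0,j}(\ini_<(I))$ at the outset, using the semicontinuity inequality $\beta^S_{0,j}(I)\le\beta^S_{0,j}(\ini_<(I))$; this inequality is exactly what the paper itself invokes (in its base case, and tacitly when it sums graded Betti numbers to reach its contradiction), so you assume nothing beyond what the paper does. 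Your dimension count $\beta^S_{0,d+1}(\ini_<(I_{\langle d\rangle}))=\dim_K S_1I_d-\dim_K S_1L_d$, where $L=\ini_<(I)$, is then the same computation that drives the paper's contradiction chain, run forwards and uniformly in $d$. What each route buys: yours is shorter and treats all components symmetrically; the paper's bookkeeping produces along the way the identities $\ini_<(I_{\langle d\rangle})=\ini_<(I)_{\langle d\rangle}$ and $\ini_<(I_{\ge d+1})=\ini_<(I)_{\ge d+1}$, which, however, are only needed inside the proof itself.

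One point needs to be made explicit in your second paragraph. The initial terms of an \emph{arbitrary} $K$-basis $g_1,\dots,g_s$ of $I_d$ need not be pairwise distinct, and then $(\ini_<(g_1),\dots,\ini_<(g_s))$ can be a proper, possibly non-stable, subideal of $L_{\langle d\rangle}$, so the hypotheses of Lemma \ref{lem:stablehelper} could fail. The fix is standard: for each monomial $u\in L_d$ choose $g_u\in I_d$ with $\ini_<(g_u)=u$ (possible because every monomial of $\ini_<(I)_d$ is the initial term of a homogeneous element of $I$ of degree $d$, as $I$ is graded); these elements have distinct initial terms, hence are linearly independent, and there are $\dim_K L_d=\dim_K I_d$ of them, so they form a basis of $I_d$ with $(\ini_<(g_u)\, :\, u\in L_d)=L_{\langle d\rangle}$. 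With this adjustment every step checks out, including your correct observations that $L_{\langle d\rangle}=(L_d)$ is stable whenever $L$ is, and that $(\mm M)_{d+1}=S_1M_d$ for any graded ideal $M$.
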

\begin{proof}
Let $d$ be the least degree of a minimal generator of $I$, and let
$D$ be the maximal degree of a minimal generator of $I$. Set
$r=D-d$. We prove the theorem by induction on $r \ge 0$.

Assume that $r=0$. Then $I$ is generated in degree $d$. Since
$\beta^S_{0}(I)=\beta^S_{0}(\ini_<(I))$, we see that $\ini_<(I)$ is
also generated in degree $d$. By assumption, $\ini_<(I)$ is a stable
monomial ideal, and thus it has a $d$-linear resolution by the
Eliahou-Kervaire formula. It follows that also $I$ has a $d$-linear
resolution since $\beta^S_{i,j}(I)\leq \beta^S_{i,j}(\ini_<(I))$ for
all $i,j$.

Next we consider the case $r>0$.  Assume that $I_{\langle d
\rangle}$ has not a $d$-linear resolution. Let $\{g_1,\dots,g_t\}$
be a Gr\"obner basis for $I$ with respect to $<$ where
$\{g_1,\dots,g_m\}$  are all the elements of this Gr\"obner basis of
degree $d$. Then $I_{\langle d \rangle}=(g_1,\dots, g_m)$ and
$\ini_<(I)_{\langle d \rangle}= (\ini_<(g_1),\dots, \ini_<(g_m))$ is
a stable monomial ideal. It follows from Lemma
\ref{lem:stablehelper} applied to $I_{\langle d \rangle}$  that
$\beta_{0,d+1}^S(\ini_<(I_{\langle d \rangle})) \neq 0$. Now
\begin{eqnarray*}
\beta_{0,d+1}^S(\ini_<(I))
&=&
\dim_K \ini_<(I)_{d+1}- \dim_K (\mm \ini_<(I)_{\langle d \rangle})_{d+1}\\
&=&
\dim_K \ini_<(I)_{d+1}- \dim_K (\mm \ini_<(I_{\langle d \rangle}))_{d+1}\\
&>&
\dim_K \ini_<(I)_{d+1}- \dim_K \ini_<(I_{\langle d \rangle})_{d+1}\\
&=&
\dim_K I_{d+1}- \dim_K (I_{\langle d \rangle})_{d+1}\\
&=&
\dim_K I_{d+1}- \dim_K (\mm I_{\langle d \rangle})_{d+1} =\beta_{0,d+1}^S(I).
\end{eqnarray*}
This gives the contradiction $ \beta_{0}^S(\ini_<(I)) = \sum_{j\in
\N}\beta_{0,j}^S(\ini_<(I))
> \sum_{j\in \N}\beta_{0,j}^S(I)
=\beta_{0}^S(I).$ Thus, $I_{\langle d \rangle}$ has a $d$-linear
resolution, as desired. Moreover, we also see  that
$\beta_{0,j}^S(\ini_<(I_{\langle d \rangle}))=0$ for $j>d$, and so
the (stable monomial) ideal $\ini_<(I_{\langle d \rangle})$  has
also a $d$-linear resolution. This implies in particular the
equation $\ini_<(I_{\langle d \rangle}) = \ini_<(I)_{\langle d
\rangle}$ which will be needed below.

Now we claim that $\ini_<(I_{\geq d+1})=\ini_<(I)_{\geq d+1}$ and
$I_{\geq d+1}$ have the same graded $0$-th Betti numbers. Granting
this for the time being and using that $\ini_<(I_{\geq d+1})$ is
again a stable monomial ideal, the induction hypothesis provides
that $I_{\geq d+1}$ is componentwise linear. Thus, $I_{\langle j
\rangle}=(I_{\geq d+1})_{\langle j \rangle}$ has a $j$-linear
resolution for each $j \geq d+1$. This shows that $I$ is
componentwise linear.

It remains to prove the claim about the graded $0$-th Betti numbers.
For $j\leq d$ we have $\beta_{0,j}^S(I_{\geq d+1})=0=\beta_{0,j}^S(\ini_<(I_{\geq d+1}))$. For $j\geq d+1$ we consider the short exact sequence
$$
0 \to I_{\geq d+1} \to I \to K(-d)^{\beta_{0,d}^S(I)} \to 0
$$
and the associated long exact sequence
\begin{eqnarray}
\label{eq:long-seq}
\dots &\to&
\Tor_1^S(I_{\geq d+1},K)_{1+j-1}
\to
\Tor_1^S(I,K)_{1+j-1}
\to
\Tor_1^S(K,K)^{\beta_{0,d}^S(I)}_{1+j-d-1}\\
&\to&
\Tor_0^S(I_{\geq d+1},K)_{j}
\to
\Tor_0^S(I,K)_{j}
\to
\Tor_0^S(K,K)^{\beta_{0,d}^S(I)}_{j-d}
\to 0.  \nonumber
\end{eqnarray}
Analogous sequences exist for  $\ini_<(I_{\geq d+1})=\ini_<(I)_{\geq
d+1}$. If $j>d+1$, then we see that
$$
\beta_{0,j}^S(I_{\geq d+1})
=
\beta_{0,j}^S(I)
=
\beta_{0,j}^S(\ini_<(I))
=
\beta_{0,j}^S(\ini_<(I)_{\geq d+1})
=
\beta_{0,j}^S(\ini_<(I_{\geq d+1}))
$$
because $K$ has a linear resolution as an $S$-module and the second
identity is given by the assumption of the Theorem. Finally, we
consider the case $j=d+1$. Sequence \eqref{eq:long-seq} provides
\begin{eqnarray*}
\beta_{0,d+1}^S(I_{\geq d+1})
&=&
\beta_{0,d+1}^S(I) + n\cdot \beta_{0,d}^S(I)-\beta_{1,1+d}^S(I)\\
&=&
\beta_{0,d+1}^S(\ini_<(I)) + n\cdot \beta_{0,d}^S(\ini_<(I))-\beta_{1,1+d}^S(I_{\langle d \rangle})\\
&=&
\beta_{0,d+1}^S(\ini_<(I)) + n\cdot \beta_{0,d}^S(\ini_<(I))-\beta_{1,1+d}^S(\ini_<(I_{\langle d \rangle}))\\
&=&
\beta_{0,d+1}^S(\ini_<(I)) + n\cdot \beta_{0,d}^S(\ini_<(I))-\beta_{1,1+d}^S(\ini_<(I)_{\langle d \rangle})\\
&=&
\beta_{0,d+1}^S(\ini_<(I)) + n\cdot \beta_{0,d}^S(\ini_<(I))-\beta_{1,1+d}^S(\ini_<(I))\\
&=&
\beta_{0,d+1}^S(\ini_<(I)_{\geq d+1})=\beta_{0,d+1}^S(\ini_<(I_{\geq d+1})).
\end{eqnarray*}
Here the  second identity follows from the assumption of the Theorem
and the fact that $I$ and $I_{\langle d \rangle}$ have the same
graded Betti-numbers $\beta_{i,i+d}$. The third identity is a
consequence of the fact that $\ini_<(I_{\langle d\rangle})$ and
$I_{\langle d \rangle}$ both have a $d$-linear resolution as was
shown above and these ideals have the same Hilbert-function. The
forth identity was also observed above. For the sixth identity we
use analogous arguments applied to $\ini_<(I)_{\geq d+1}$. This
concludes the proof.
\end{proof}

\begin{rem}
\
\begin{enumerate}
\item
As mentioned in the  introduction, to decide whether a given
ideal $I$ is componentwise linear one checks if the graded Betti
numbers of $I$ and $\gin_{<_{\rev}}(I)$ coincide (see, e.g.,
Theorem \ref{thm:gin} below). However, to do this in a concrete
situation one chooses $\phi \in \GL(n;K)$ randomly and compares
the Betti numbers $I$ with the ones of
$\ini_{<_{\rev}}(\phi(I))$ provided that
$\ini_{<_{\rev}}(\phi(I))$ is a stable monomial ideal. The
latter assumption is usually fulfilled if $\chara K=0$ or
$\chara K$ is sufficiently large. Since $I$ is componentwise
linear if and only if $\phi(I)$ is componentwise linear, Theorem
\ref{thm:inarbitrary} shows that this randomized test can really
be used to conclude that $I$ is componentwise linear if all
Betti numbers of $I$ and $\ini_{<_{\rev}}(\phi(I))$ are equal.
\item
The assumption that the initial ideal is stable is essential in
Theorem \ref{thm:inarbitrary}. Usually initial ideals of
componentwise linear ideals do not satisfy this assumption. For
 example, the ideal $I=(x_2,\dots, x_n)$  of $K[x_1,\dots, x_n]$
has a linear resolution given by the Koszul complex on
$x_2,\dots, x_n$. Thus it is componentwise linear. We have
$\ini_<(I)=I$ with respect to any monomial order $<$ on $S$, but
$I$ is not a stable ideal with respect to $x_1>\dots>x_n$.
\item
For an arbitrary monomial order on a polynomial ring there is no
hope that the reverse implication of Theorem
\ref{thm:inarbitrary} is true. Indeed, for an  example consider
$S = \Z/31013\Z[x_1,\dots,x_8]$ and the ideal $I$, generated by
the $2$-minors of the $2\times 4$ matrix
$$
\left[
  \begin{array}{cccc}
    x_1 & x_2 & x_3 & x_4 \\
    x_5 & x_6 & x_7 & x_8
  \end{array}
\right].
$$
It is well-known that $I$ has a 2-linear resolution that is
given by the  Eagon-Northcott complex; see, e.g., \cite[Theorem
2.16]{BrVe}. Thus, $I$ and all $\phi(I)$, for $\phi\in
\GL(8;\Z/31013\Z)$, are componentwise linear ideals. A
computation with Macaulay 2 \cite{MC2} shows, that a randomly
chosen $\phi\in \GL(8;\Z/31013\Z)$ exists such that
$\ini_{<_{\lex}}(\phi(I))$ equals the stable monomial ideal
$$
J=(x_1^2, x_1x_2, x_1x_3, x_1x_4, x_1x_5, x_2^2, x_2x_3^2, x_2x_3x_4, x_2x_3x_5, x_2x_4^3, x_3^4).
$$
This ideal $J$ is the lex-gin of $I$ with very high probability.
But we see that $\beta_0^S(J)=11>6=\beta_0^S(\phi(I))$.
\end{enumerate}
\end{rem}

The reverse implication of Theorem \ref{thm:inarbitrary} is
``generically'' true if we consider the reverse lexicographic order.
The next result is the desired generalization of \cite[Theorem
1.1]{ArHeHi} to any characteristic.

\begin{thm}
\label{thm:gin} Let $0\neq I \subset S$ be a graded ideal. Then the
following statements are equivalent:
\begin{enumerate}
\item
$\gin_{<_{\rev}}(I)$ is stable and
$\beta^S_{i,j}(I)=\beta^S_{i,j}(\gin_{<_{\rev}}(I))$ for all
$i,j$;
\item
$\gin_{<_{\rev}}(I)$ is stable and
$\beta^S_{i}(I)=\beta^S_{i}(\gin_{<_{\rev}}(I))$ for all $i$;
\item
$\gin_{<_{\rev}}(I)$ is stable and
$\beta^S_{0}(I)=\beta^S_{0}(\gin_{<_{\rev}}(I))$;
\item
$I$ is componentwise linear.
\end{enumerate}
\end{thm}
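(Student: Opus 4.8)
The plan is to prove the cyclic chain of implications $(1)\Rightarrow(2)\Rightarrow(3)\Rightarrow(4)\Rightarrow(1)$. The implications $(1)\Rightarrow(2)$ and $(2)\Rightarrow(3)$ are immediate from the definitions, since total Betti numbers are obtained by summing graded Betti numbers over the internal degree, and the $0$-th Betti number is one particular total Betti number; no work is needed beyond observing that equality of finer invariants forces equality of the coarser ones. The implication $(3)\Rightarrow(4)$ is where Theorem \ref{thm:inarbitrary} does the heavy lifting: assuming $\gin_{<_{\rev}}(I)$ is stable and $\beta^S_0(I)=\beta^S_0(\gin_{<_{\rev}}(I))$, I would pass to a generic coordinate change $\phi\in\GL(n;K)$ so that $\ini_{<_{\rev}}(\phi(I))=\gin_{<_{\rev}}(I)$, note that $\beta^S_0(\phi(I))=\beta^S_0(I)$ since a linear change of variables preserves all graded Betti numbers, and apply Theorem \ref{thm:inarbitrary} to $\phi(I)$ to conclude that $\phi(I)$ is componentwise linear. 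Since componentwise linearity is invariant under $\GL(n;K)$, it follows that $I$ itself is componentwise linear.

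The substantive direction is $(4)\Rightarrow(1)$, and I expect this to be the main obstacle. Here I must show that componentwise linearity forces $\gin_{<_{\rev}}(I)$ to be stable and to share all graded Betti numbers with $I$. The key structural input is the behavior of $\gin_{<_{\rev}}$ under the operations $M\mapsto M_{\langle d\rangle}$ and $M\mapsto M_{\geq d}$: in the reverse lexicographic order the generic initial ideal is compatible with taking truncations, so that $\gin_{<_{\rev}}(I_{\langle d\rangle})$ and $\gin_{<_{\rev}}(I)_{\langle d\rangle}$ agree. I would exploit the general fact that $\gin_{<_{\rev}}(I)$ is a Borel-fixed ideal, hence stable in every characteristic (the full strongly stable property can fail in positive characteristic, but the weaker stability that Theorem \ref{thm:Eliahou-Kervaire} requires does hold), giving the stability assertion in $(1)$ for free.

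For the equality of graded Betti numbers I would argue by induction on the number of distinct degrees of minimal generators, mirroring the inductive scheme already used in the proof of Theorem \ref{thm:inarbitrary}. Since $I$ is componentwise linear, each $I_{\langle d\rangle}$ has a $d$-linear resolution; the corresponding stable ideal $\gin_{<_{\rev}}(I_{\langle d\rangle})$ has a $d$-linear resolution as well by the Eliahou--Kervaire formula, and because $I_{\langle d\rangle}$ and $\gin_{<_{\rev}}(I_{\langle d\rangle})$ have identical Hilbert functions, a linear resolution on both sides forces their graded Betti numbers to coincide degree by degree. I would then splice these componentwise comparisons together, using the short exact sequences $0\to I_{\geq d+1}\to I\to K(-d)^{\beta^S_{0,d}(I)}\to 0$ exactly as in the proof of Theorem \ref{thm:inarbitrary} and the parallel sequences for the generic initial ideals, to transfer the equality from the truncations $I_{\geq d+1}$ back to $I$. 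The delicate point will be controlling the connecting maps in these long exact sequences so that no cancellation is lost; the linearity of the resolution of $K$ and of each graded piece is what guarantees this, and verifying that the commutativity of $\gin_{<_{\rev}}$ with truncation is genuinely available in arbitrary characteristic is the step I would scrutinize most carefully.
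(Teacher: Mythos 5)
Your handling of $(1)\Rightarrow(2)\Rightarrow(3)$ and $(3)\Rightarrow(4)$ matches the paper exactly: the first two are trivial, and for $(3)\Rightarrow(4)$ the paper likewise picks $\phi \in \GL(n;K)$ with $\gin_{<_{\rev}}(I)=\ini_{<_{\rev}}(\phi(I))$, applies Theorem \ref{thm:inarbitrary} to $\phi(I)$, and uses $\GL$-invariance of componentwise linearity.

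The genuine gap is in $(4)\Rightarrow(1)$, exactly at the point you claim comes ``for free.'' It is \emph{false} that a Borel-fixed ideal is stable in every characteristic: in positive characteristic $p$, Borel-fixedness only yields the much weaker $p$-Borel exchange condition, and stability in the sense of Theorem \ref{thm:Eliahou-Kervaire} can fail. Concretely, let $\chara K = 2$ and $I = (x_1^2, x_2^2) \subset K[x_1,x_2]$. Since $(a x_1 + b x_2)^2 = a^2 x_1^2 + b^2 x_2^2$, this ideal is Borel-fixed and is even its own reverse-lexicographic gin (for any $\phi \in \GL(2;K)$ the degree-two part of $\phi(I)$ is again the span of $x_1^2, x_2^2$), yet it is not stable because $x_1 x_2 = x_1 \cdot x_2^2/x_2 \notin I$. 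So the stability assertion in (1) cannot be extracted from Borel-fixedness; indeed, the whole reason the theorem carries ``$\gin_{<_{\rev}}(I)$ is stable'' as an explicit part of conditions (1)--(3) is that it is not automatic in positive characteristic. The paper instead derives stability from the hypothesis that $I$ is componentwise linear, citing \cite[Lemma 1.4]{CoHeHi}; this is a substantive result, not a formality. The same defect infects your Betti-number argument, where you invoke ``the corresponding stable ideal $\gin_{<_{\rev}}(I_{\langle d\rangle})$'': its stability again requires the Conca--Herzog--Hibi lemma (applied to $I_{\langle d\rangle}$, which has a linear resolution), not Borel-fixedness.

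Once stability is supplied by the correct lemma, the rest of your sketch --- linear resolutions on both sides of each component, equal Hilbert functions forcing equal graded Betti numbers, splicing along the truncation sequences $0 \to I_{\geq d+1} \to I \to K(-d)^{\beta^S_{0,d}(I)} \to 0$ --- is essentially the argument of \cite[Theorem 1.1]{ArHeHi}, which the paper invokes verbatim, adding only the observation that $\gin_{<_{\rev}}(I)$, being stable, is itself componentwise linear (in characteristic zero it is even strongly stable). So your architecture is sound, but the stability step needs the real input from \cite{CoHeHi}; the ``Borel-fixed $\Rightarrow$ stable'' shortcut would make the characteristic-free claim of the theorem vacuous, and it is simply not true.
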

\begin{proof}
Trivially, the implications (i) $\Rightarrow$ (ii) and (ii)
$\Rightarrow$ (iii) are true.

(iii) $\Rightarrow$ (iv): There exists a $\phi \in \GL(n;K)$ such
that $\gin_{<_{\rev}}(I)=\ini_{<_{\rev}}(\phi(I))$. It follows from
Theorem \ref{thm:inarbitrary} that $\phi(I)$ is componentwise
linear. Hence also $I$ is componentwise linear.

(iv) $\Rightarrow$ (i): Let us assume that  $I$ is componentwise
linear. By \cite[Lemma 1.4]{CoHeHi} we know that
$\gin_{<_{\rev}}(I)$ is stable. The proof of the statement about the
Betti numbers is verbatim the same as the (characteristic free)
proof of the corresponding implication of \cite[Theorem
1.1]{ArHeHi}. The only difference is to observe that
$\gin_{<_{\rev}}(I)$ is still componentwise linear because it is a
stable monomial ideal. (In characteristic zero it is even strongly
stable, as used in the proof of \cite[Theorem 1.1]{ArHeHi}.)
\end{proof}

\begin{rem}
\
\begin{enumerate}
\item Since in characteristic zero $\gin_< (I)$ is always strongly
stable, Theorem \ref{thm:gin} recovers indeed the main result of
\cite{ArHeHi}.

\item
In characteristic zero \cite[Theorem 1.1]{ArHeHi} was
generalized by Conca \cite{Co} (see, e.g., Theorem 4.5) in a
different direction.  He gives equivalent vanishing statements
for certain Koszul homology groups. With suitable adjustments,
these statements are also true in positive characteristic.
\item
If one assumes that $\ini_{<_{\rev}}(I)$ and, for all $j$,
$\ini_{<_{\rev}}(I_{\langle j \rangle})$  are stable monomial
ideals, then one can also prove a  result that is analogous to
Theorem \ref{thm:gin}, but where $\ini_{<_{\rev}}(I)$ is used
instead of $\gin_{<_{\rev}}(I)$.
\end{enumerate}
We leave the details of the proofs to the interested reader.
\end{rem}

A monomial ideal $I$ in $K[x_1,\dots,x_n]$ may be considered as a
monomial ideal in   the polynomial ring $F[x_1,\dots,x_n]$, where
$F$ is any other field. So one might ask if the property
``componentwise linear'' for $I$ is independent of the
characteristic of the considered fields. This is not the case as the
following example shows.
\begin{ex}
Let $S=K[x_1,\dots,x_6]$. With the notation from \cite[p.
236]{BH-book}, the Stanley-Reisner ideal of the triangulation
$\Delta$ of the real projective plane $\mathbb{P}^2_{\R}$ is
$$
I_\Delta=
(
x_1x_2x_3,
x_1x_2x_4,
x_1x_3x_5,
x_1x_4x_6,
x_1x_5x_6,
x_2x_3x_6,
x_2x_4x_5,
x_2x_5x_6,
x_3x_4x_5,
x_3x_4x_6)
.
$$
If $\chara K\neq 2$, then $K[\Delta]=S/I_\Delta$ is a
$2$-dimensional Cohen-Macaulay ring. If  $\chara K =2$, then it is
well-known that $K[\Delta]$ is not Cohen-Macaulay. We know that
$\depth K[\Delta]>0$. Thus we get immediately that $\depth
K[\Delta]=1$. Hence the first local cohomology module
$H^1_\mm(K[\Delta])\neq 0$. Since  $\Delta$ is  a triangulation of a
manifold without boundary, it is known that $K[\Delta]$ is
Buchsbaum. Hence $H^1_\mm(K[\Delta])$ has finite length; see, e.g.,
\cite[Beispiel 6.2.3]{Sch} for details. Then $K[\Delta]$ cannot be
sequentially Cohen-Macaulay as such rings can never have a
 nonzero  first local cohomology module of finite length
(see, e.g., \cite[Theorem 2.11]{St}).

We set $[n]=\{1,\dots,n\}$. Let  $\Delta^\ast=\{F\subseteq [n]:
[n]\setminus F \not\in \Delta \}$ be the Alexander dual of $\Delta$,
and let $I_{\Delta^\ast}$ be its Stanley-Reisner ideal. Then, in
this case, $I_{\Delta}= I_{\Delta^\ast}$. Hence,  \cite[Theorem
9]{HeReWe} provides that $I_{\Delta}$ is componentwise linear if
$\chara K \neq 2$ and $I_{\Delta}$ is not componentwise linear if
$\chara K= 2$.
\end{ex}

In the next sections we also need another method to decide whether a
graded ideal is componentwise linear. For this we consider the
following construction.

\begin{constr}\rm
Let $M$ be a finitely generated graded $S$-module. Let $F_{\lpnt}$
be the minimal graded free resolution of $M$ with differential
$\partial$. Since $\partial_n(F_n)\subseteq \mm F_{n-1}$ for all
$n$, we can define subcomplexes $F^i_{\lpnt}$ of $F_{\lpnt}$ by
setting
$$
F^i_{n}=\mm^{i-n} F_n \text{ for all } n\in \Z
$$
together with the differential $\partial$. Here we set $\mm^j=S$ for
$j\leq 0$. Now $F^{i+1}_{\lpnt} \subseteq F^i_{\lpnt}$ for all $i$.
Hence these subcomplexes define a filtration of $F_{\lpnt}$. The
associated graded complex $F^{\lin}_{\lpnt}$ with respect to this
filtration is called the \emph{linear part} $F_{\lpnt}^{lin}$ of
$F_{\lpnt}$.

We may consider $F_{\lpnt}^{lin}$ again as a complex of finitely
generated graded $S$-modules with the following explicit
description. We have $F^{\lin}_{n}=F_n$. The differential of
$F^{\lin}$ can be obtained as follows. Choose homogeneous bases of
the free modules $F_n$ of $F_{\lpnt}$ such that all the maps
$\partial_i$ are described by homogeneous matrices. Then in all
matrices replace each entry by zero whose degree is at least two.
These matrices induce exactly the differential of $F^{\lin}$.

Similarly, for a matrix $A$ whose entries are homogeneous
polynomials of $S$, we define $A^{\lin}$ as the matrix obtained from
$A$ by replacing each entry  by zero whose degree is at least two.
We call $A^{lin}$ the \emph{linearization} of $A$.
\end{constr}

The following result has been shown in \cite[Theorem 3.2.8]{Ro01};
see also \cite[Theorem 5.6]{IyRo} for a shorter proof.

\begin{thm}
  \label{lem:linearization}
Let $M$ be a finitely generated graded $S$-module with minimal
graded free resolution $F_{\lpnt}$. Then the following statements
are equivalent:
\begin{enumerate}
\item
$M$ is componentwise linear;
\item
the complex $F_{\lpnt}^{lin}$ is acyclic.
\end{enumerate}
\end{thm}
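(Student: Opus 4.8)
The plan is to interpret ``acyclic'' as exact in positive homological degrees, so that (ii) asserts that $F^{\lin}_{\lpnt}$ is a (necessarily linear) resolution of $\Coker(\partial_1^{\lin})$. The first step is formal: a linear matrix entry sends a basis element of $S(-j)$ in $F_n$ to basis elements of $S(-(j-1))$ in $F_{n-1}$, so the linearized differential preserves $c=j-n$. Hence $F^{\lin}_{\lpnt}$ splits into its linear strands,
\[ F^{\lin}_{\lpnt}=\bigoplus_{c\in\Z}L^{(c)}_{\lpnt},\qquad L^{(c)}_n=S(-n-c)^{\beta^S_{n,n+c}(M)}, \]
each carrying a purely linear differential, and $F^{\lin}_{\lpnt}$ is acyclic if and only if every $L^{(c)}_{\lpnt}$ is exact in positive degrees. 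I would carry out the proof in this strand-wise form.

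One elementary remark drives everything: the differential of each $L^{(c)}_{\lpnt}$ has entries in $\mm$, so its image lies in $\mm L^{(c)}_{n-1}$ and can never surject onto a nonzero free module. Consequently, if a strand has its lowest nonzero term in homological degree $n_0\ge1$, then $H_{n_0}(L^{(c)}_{\lpnt})\ne0$. Acyclicity of $F^{\lin}_{\lpnt}$ thus forces $\beta^S_{0,c}(M)\ne0$ whenever some $\beta^S_{n,n+c}(M)\ne0$. This settles the base case of the induction below, where $\reg M$ equals the least generator degree $d_0$: such an $M$ has a linear resolution, for which $F^{\lin}_{\lpnt}=F_{\lpnt}$ is exact, and conversely acyclicity confines all Betti numbers to the single strand $c=d_0$, that is, forces a linear resolution; a module with a linear resolution is componentwise linear.

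For the general case I would induct on the nonnegative integer $\reg M-d_0$, peeling off the bottom degree. With $t=\dim_K M_{d_0}$ there is a short exact sequence
\[ 0\to M_{\ge d_0+1}\to M\to K(-d_0)^{t}\to 0, \]
whose quotient has a $d_0$-linear Koszul resolution and whose submodule $M_{\ge d_0+1}$ is again componentwise linear with strictly smaller invariant $\reg-d_0$. If the minimal resolutions $G_{\lpnt}$ of $M_{\ge d_0+1}$, $F_{\lpnt}$ of $M$, and $K_{\lpnt}$ of $K(-d_0)^t$ fit into a short exact sequence of complexes whose linearization $0\to G^{\lin}_{\lpnt}\to F^{\lin}_{\lpnt}\to K^{\lin}_{\lpnt}\to0$ remains exact, the long exact homology sequence transports acyclicity from the two outer linear parts (induction, respectively the Koszul complex) to $F^{\lin}_{\lpnt}$, giving (i) $\Rightarrow$ (ii); for (ii) $\Rightarrow$ (i) I would run the same sequence in reverse, using the $\mm$-image remark to split off $K(-d_0)^t$ and leave $M_{\ge d_0+1}$ with acyclic linear part.

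The main obstacle is exactly the exactness of that linearized sequence. Linearization is additive but not exact and does not commute with mapping cones, so a short exact sequence of modules need not yield one of linear complexes. What should save the situation here is a degree gap: $K(-d_0)^t$ is generated in degree $d_0$ while $M_{\ge d_0+1}$ is generated in degrees $\ge d_0+1$, so a comparison map lifting the connecting homomorphism raises internal degree by at least one. The crux is to exploit this to show the mapping cone of $K_{\lpnt}\to G_{\lpnt}[-1]$ is minimal and that the comparison map is strict for the filtrations of the Construction defining the linear parts; strictness is precisely what makes passage to the associated graded complexes, that is, to linear parts, exact. If this bookkeeping turns out to be delicate, I would instead invoke the Bernstein--Gelfand--Gelfand correspondence, under which $F^{\lin}_{\lpnt}$ is the $S$-dual of a genuine complex of free modules over the exterior algebra; there both the acyclicity of the linear part and the componentwise linearity of $M$ translate into a single vanishing condition on the associated exterior module, bypassing the mapping-cone analysis.
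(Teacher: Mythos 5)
Your strand decomposition, the Nakayama-type remark, and the base case are all fine, but the inductive step rests on a construction that does not exist, and you have in fact put your finger on the fatal point yourself while underestimating it. A short exact sequence of \emph{minimal} resolutions $0\to G_{\lpnt}\to F_{\lpnt}\to K_{\lpnt}\to 0$ lifting $0\to M_{\ge d_0+1}\to M\to K(-d_0)^t\to 0$ would force termwise additivity $\beta^S_{i,j}(M)=\beta^S_{i,j}(M_{\ge d_0+1})+\beta^S_{i,j}(K(-d_0)^t)$, and this fails even for componentwise linear $M$: take $M=\mm=(x_1,\dots,x_n)$ with $n\ge 2$, so that $M_{\ge 2}=\mm^2$ and $M/M_{\ge 2}\cong K(-1)^n$; then $\beta^S_{1,2}(\mm)=\binom{n}{2}$ while $\beta^S_{1,2}(\mm^2)+\beta^S_{1,2}(K(-1)^n)=0+n^2$. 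The same example kills the mapping-cone variant: your claimed degree gap is an off-by-one error, because the generators of $K_n$ sit in degree $d_0+n$ while the generators of $G_{n-1}$ sit in degrees $\ge (d_0+1)+(n-1)=d_0+n$ --- the homological shift exactly eats the gap, so the comparison map $K_n\to G_{n-1}$ can (and, whenever the linear strand of $M_{\ge d_0+1}$ is nonzero, must) contain unit entries, and the cone is genuinely non-minimal. So the step is not ``delicate bookkeeping''; it is false. There is a second, independent gap in (ii)$\Rightarrow$(i): even granting that acyclicity of $F^{\lin}_{\lpnt}$ passes to $M_{\ge d_0+1}$, componentwise linearity of $M$ also requires that $M_{\langle d_0\rangle}$ itself have a $d_0$-linear resolution, and your sketch never produces this; it is precisely the hard part of the theorem, not a formality that ``splits off.''

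Your fallback --- invoking the Bernstein--Gelfand--Gelfand correspondence so that both conditions ``translate into a single vanishing condition on the exterior module'' --- is not a proof but a citation of the theorem in disguise: establishing that componentwise linearity corresponds to such a vanishing condition on the Koszul-dual side is exactly the content of the known proofs. Note also that the paper itself offers no argument to compare against: it quotes the statement from R\"omer's dissertation \cite[Theorem 3.2.8]{Ro01} and from \cite[Theorem 5.6]{IyRo}, the latter of which carries out, with care, essentially the Koszul-duality route you gesture at. If you want a self-contained argument, that is the place to start, but the inductive peeling of the bottom degree as you have set it up cannot be repaired by minimality of any of the three possible mapping cones.
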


%
%
%
\section{Gorenstein ideals}
\label{sec:gorenstein}

As a first application of our criterion for componentwise linearity
using generic initial ideals, we characterize the componentwise
linear Gorenstein ideals in $S=K[x_1,\dots,x_n]$.

\begin{thm}
  \label{thm:comp-lin-Gor}
Let $0 \neq I$ be a graded Gorenstein ideal of $S$. Then $I$ is
componentwise linear if and only if $I$ is a complete intersection
whose minimal generators, except possibly one, are linear forms.
\end{thm}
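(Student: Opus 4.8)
I will prove the two implications separately; the forward direction is where the real work lies.

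\emph{The reverse implication.} Suppose $I=(\ell_1,\dots,\ell_{c-1},f)$ is a complete intersection in which $\ell_1,\dots,\ell_{c-1}$ are linearly independent linear forms and $\deg f=D$. After a linear change of coordinates I may take $\ell_i=x_i$, and after reducing $f$ modulo $x_1,\dots,x_{c-1}$ I may assume $f\in K[x_c,\dots,x_n]$. Since the generators form a regular sequence, the minimal graded free resolution of $S/I$ is the Koszul complex $K_{\lpnt}$ on $x_1,\dots,x_{c-1},f$, and the minimal resolution of $I$ is its truncation. I compute the linear part and invoke Theorem \ref{lem:linearization}. The Koszul differential has only the linear entries $x_1,\dots,x_{c-1}$ and the single entry $f$ of degree $D$; if $D\ge 2$ the linearization kills every occurrence of $f$, so $K_{\lpnt}^{\lin}$ splits as the direct sum of the (acyclic) Koszul complex on $x_1,\dots,x_{c-1}$ and a copy of it shifted up one homological step. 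Thus the only homology of $K_{\lpnt}^{\lin}$ in positive homological degree sits in degree one and comes from the shifted summand. The minimal resolution of $I$ is obtained by dropping $K_0=S$ and shifting homological degrees down by one, so this homology becomes the zeroth homology of the linear part of the resolution of $I$, where it is harmless; hence that linear part is acyclic and $I$ is componentwise linear. (If $D=1$ all generators are linear and $I$ has a linear resolution.)

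\emph{The forward implication.} Assume $I$ is Gorenstein of codimension $c=\pd S/I$ and componentwise linear. By the implication (iv)$\Rightarrow$(i) of Theorem \ref{thm:gin}, $J:=\gin_{<_{\rev}}(I)$ is a stable monomial ideal with $\beta^S_{i,j}(I)=\beta^S_{i,j}(J)$, hence $\beta^S_{i,j}(S/I)=\beta^S_{i,j}(S/J)$ for all $i,j$. On the other hand, since $S/I$ is Gorenstein its minimal free resolution is self-dual, so its graded Betti numbers are symmetric: there is an integer $\sigma$ with $\beta^S_{i,j}(S/I)=\beta^S_{c-i,\sigma-j}(S/I)$ and $\beta^S_{c}(S/I)=1$. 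Transporting this to $S/J$, I am reduced to a purely combinatorial problem: classify the stable monomial ideals whose graded Betti numbers are symmetric with top Betti number $1$.

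This is the heart of the argument. I feed the Eliahou--Kervaire formula \eqref{eq:ElKe-formula} into the two-variable generating function
$$
\mathcal B(s,t)=\sum_{i,j}\beta^S_{i,i+j}(S/J)\,s^{i}t^{j}=1+s\sum_{u\in G(J)}t^{\deg u-1}(1+s)^{\max(u)-1},
$$
where $G(J)$ is the minimal generating set, and I record the Betti symmetry as the functional equation $\mathcal B(s,t)=s^{c}t^{e}\,\mathcal B(1/s,1/t)$ with $e=\sigma-c=\reg(S/I)$. Comparing the coefficient of the top power $s^{c}$ forces exactly one minimal generator $u_0$ with $\max(u_0)=c$ and shows it has degree $e+1$; comparing further coefficients (most naturally by induction on $c$, peeling off $u_0$) pins down the rest. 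The key point is that one must use the \emph{graded} symmetry, not merely symmetry of the total Betti numbers: the stable ideal $(x_1,x_2^2,x_2x_3)$ already has palindromic total Betti numbers $(1,3,3,1)$ yet is not a complete intersection, and only the refined identity eliminates such ideals. Using at each stage that the exponents come from an honest stable ideal, I expect to conclude $J=(x_1,\dots,x_{c-1},x_c^{\,a})$ with $a=e+1$.

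Once $J$ is identified as this monomial complete intersection, I transfer back: because $I$ and $J$ share all graded Betti numbers, $I$ has exactly $c-1$ minimal generators of degree $1$ and one of degree $a$, so $\beta^S_0(I)=c=\htt(I)$. An ideal of height $c$ generated by $c$ elements in the regular ring $S$ is a complete intersection, and the $c-1$ linear generators, being part of a regular sequence, are linearly independent linear forms; this is exactly the asserted structure. The main obstacle is the previous paragraph: showing that symmetry of the graded Betti table alone forces a stable ideal to be a monomial complete intersection. The single-variable specializations of the functional equation are necessary but far from sufficient, so the difficulty is to organize the full coefficient comparison while keeping track of the combinatorial constraints that stability imposes on the multiset $\{(\deg u,\max(u))\}_{u\in G(J)}$.
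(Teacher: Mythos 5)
Your reverse implication is correct and takes a genuinely different route from the paper. You compute the linear part of the (truncated) Koszul resolution of $I$ and invoke Theorem \ref{lem:linearization}: killing the entry $f$ splits the linearized Koszul complex as $K(x_1,\ldots,x_{c-1})\oplus K(x_1,\ldots,x_{c-1})(-D)[1]$, so its homology is concentrated in homological degrees $0$ and $1$, and after dropping the summand $S$ and shifting, the linear part of the resolution of $I$ is acyclic. The paper instead argues via regularity: by Bayer--Stillman the ideal $\gin_{<_{\rev}}(I)$ has regularity $e$, the result of \cite{ERT} then forces $\gin_{<_{\rev}}(I)=(x_1,\ldots,x_{c-1},x_c^e)$, and Theorem \ref{thm:gin} concludes. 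Your argument is more self-contained (no generic initial ideals, no regularity transfer); the paper's has the bonus of identifying the gin explicitly, which feeds into Remark \ref{rem:Gor}.

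The forward implication, however, has a genuine gap, and it sits exactly where you say it does. Your architecture reduces everything to the claim that a stable monomial ideal $J$ with symmetric graded Betti table and top Betti number $1$ must be $(x_1,\ldots,x_{c-1},x_c^{e+1})$, but you never prove this: you set up the generating function $\mathcal{B}(s,t)$ and its functional equation, and then write ``I expect to conclude.'' Since all the remaining steps of your forward direction are routine, this unproved classification \emph{is} the proof, and as written it is missing. Note that the paper sidesteps the classification entirely (it appears only afterwards, as Remark \ref{rem:Gor}): from $\beta^S_{c,c+e}(S/I)=1$ and the Eliahou--Kervaire formula, where $\binom{\max(m_k)-1}{c-1}\neq 0$ forces $\max(m_k)\geq c$, the paper extracts a single fact --- any stable ideal with these Betti numbers has a minimal generator of degree $e+1$, hence $\beta^S_{1,e+1}(S/I)\neq 0$; self-duality converts this into $\Tor^S_{c-1}(S/I,K)_{c-1}\neq 0$, and since that linear strand is governed by the Koszul complex on the linear forms contained in $I$, the ideal $I$ must contain $c-1$ independent linear forms; height $c$ plus Gorenstein then yields the complete intersection. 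If you prefer to finish your own route, you do not need the full functional equation: since $S/I$ has Cohen--Macaulay type $1$, $\beta^S_{c,j}(S/J)$ is nonzero only for $j=c+e$, so by Eliahou--Kervaire every minimal generator except a single $u_0$ (of degree $e+1$ with $\max(u_0)=c$) satisfies $\max\leq c-1$; the graded symmetry gives $\beta^S_{c-1,c-1}(S/J)=\beta^S_{1,e+1}(S/J)\geq 1$, which by Eliahou--Kervaire and the hockey-stick identity equals $\binom{\ell}{c-1}$ where $x_1,\ldots,x_\ell$ are the degree-one generators; thus $\ell\geq c-1$, and minimality of the generating set (any further generator would be divisible by one of $x_1,\ldots,x_{c-1}$, and $x_c\in J$ would contradict minimality of $u_0$ when $e\geq 1$) forces $J=(x_1,\ldots,x_{c-1},x_c^{e+1})$. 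Some such argument must actually be supplied before your proof is complete.
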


\begin{proof}
Denote by $c$ the height of $I$.

If $I$ is a complete intersection that is generated by linear forms
and one form of degree $e \geq 1$, then its Castelnuovo-Mumford
regularity is $e$. Using \cite{BaSt}, we conclude that
$\gin_{<_{\rev}}(I)$ also has regularity $e$. Hence Proposition 10
in \cite{ERT} provides that the ideal $(\gin_{<_{\rev}}(I))_{\langle
e \rangle}$ is stable. It follows that $\gin_{<_{\rev}}(I) =
(x_1,\ldots,x_{c-1}, x_c^e)$ because $\gin_{<_{\rev}}(I)$ has only
one minimal generator of degree $e$ if $e > 1$. Since
$\gin_{<_{\rev}}(I)$ is stable, Theorem \ref{thm:gin} yields that
$I$ is componentwise linear.

For the converse, assume that $I$ is componentwise linear. We will
use the fact that the minimal graded free resolution of $S/I$ over $S$ is
self-dual, i.e., dualizing with respect to $S$ the deleted
resolution of $S/I$ gives again the resolution of $S/I$, up to a
degree shift. Suppose that $S/I$ does not have a $(c-1)$-syzygy of
degree $c-1$, that is, $\Tor_{c-1}^S (S/I, K)_{c-1} = 0$. By
self-duality of the resolution, this means that $I$ does not have a
minimal generator of degree $e+1$, where $e = \reg (S/I)$ is the
Castelnuovo-Mumford regularity of $S/I$. However, since $I$ is
Gorenstein, $\dim_K \Tor_c^S (S/I, K)_{c+e} = 1$. Using the
Eliahou-Kervaire resolution (see \cite{ElKe} and above), it follows
that $I$ does not have the graded Betti numbers of a stable ideal.
Hence, by Theorem \ref{thm:gin}, $I$ cannot be componentwise linear.
This contradiction shows that $I$ must have a $(c-2)$-syzygy of
degree $c-1$. This syzygy is determined by the linear forms in $I$.
The ideal generated by these linear form is resolved by the Koszul
complex. It follows that $I$ must contain a regular sequence
consisting of $c-1$ linear forms. Since $I$ has height $c$, $I$ must
be a  complete intersection as claimed.
\end{proof}

\begin{rem}
  \label{rem:Gor}
\
\begin{enumerate}
  \item The above proof shows that the strongly stable ideals that
have the same graded Betti numbers as a Gorenstein ideal are
precisely the monomial ideals of the form
\[
(x_1,\ldots,x_{c-1}, x_c^e),
\]
where $e$ is any positive integer. Moreover, these ideals are
all the generic initial ideals of componentwise linear
Gorenstein ideals. In particular, these generic initial ideals
are always strongly stable, regardless of the characteristic of
the ground field.

  \item Notice that the result of our characterization of the
  Gorenstein componentwise linear ideals is similar to Goto's
one for the  characterization of integrally closed  complete
intersections (see \cite{Goto}).
\end{enumerate}
\end{rem}

%
%
%
\section{Standard determinantal ideals}
\label{sec:determinantal}

As a second class of examples, we treat standard determinantal
ideals now. Recall that a {\em standard determinantal ideal} in $S$
is an ideal generated by the maximal minors of a $(m+c-1) \times m$
homogenous matrix that has (the expected) height $c \ge 1$. A
homogeneous matrix is a matrix describing a graded homomorphism of
free graded $S$-modules.

Let now $A = (a_{i j})$ be a homogeneous $(m+c-1) \times m$ matrix.
Its {\em degree matrix} is $\partial A = (\deg a_{i j})$. The
homogeneity of $A$ means that its entries $a_{i j} \in S$ are
homogeneous polynomials that satisfy the conditions
\begin{equation*}
  \deg a_{i, i} + \deg a_{i+1, i+1} =
\deg a_{i+1, i} + \deg a_{i, i+1}.
\end{equation*}
Permuting rows and columns of $A$ we may assume that the entries of
$\partial A$ do not increase from left to right and from bottom to
top, that is,
\begin{equation}
  \label{eq:deg-order}
\deg a_{i, j} \geq \deg a_{i, j+1} \quad \text{and} \quad \deg a_{i, j}
\geq \deg a_{i-1, j}.
\end{equation}

Throughout this section we assume that $\partial A$  satisfies these
conditions. Observe that we also may assume that the entries of $A$
are all in the irrelevant maximal ideal $\mm = (x_1,\ldots,x_n)$.

The following is the main result of this section.

\begin{thm}
  \label{thm:stand-det}
Let $I = I_m (A)$ be the standard determinantal ideal of height $c$
that is generated by the maximal minors of the homogeneous matrix
$A$. Assume that its degree matrix $\partial A$ satisfies
\eqref{eq:deg-order}. Then $I$ is componentwise linear if and only
if
\begin{itemize}
\item[(i)] $c = 1$; \; or

\item[(ii)] $c = 2$ and the entries on the main diagonal of $A$ all
have degree one, i.e., for all $i$,  $\deg a_{i, i} = 1$; \; or

\item[(iii)] $c \geq 3$ and the entries in all rows of $A$, except
possibly the last one, have degree one.
\end{itemize}
\end{thm}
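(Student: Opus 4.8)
The plan is to combine the homological criterion of Theorem \ref{lem:linearization} with the Eagon-Northcott resolution of a standard determinantal ideal. Using the homogeneity relation I would first write $\deg a_{ij}=r_i+s_j$, so that \eqref{eq:deg-order} becomes $r_1\le\cdots\le r_{m+c-1}$ and $s_1\ge\cdots\ge s_m$, while the assumption that all entries lie in $\mm$ reads $r_i+s_j\ge 1$ for all $i,j$. The case $c=1$ is immediate: then $I$ is principal, with resolution $0\to S(-d)\to I\to 0$, whose linear part is trivially acyclic. So assume $c\ge 2$.

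The key structural observation is the following. Since every entry of $A$ lies in $\mm$, the Eagon-Northcott complex $\mathrm{EN}_{\bullet}$ of the associated map is the \emph{minimal} graded free resolution of $S/I$, and the minimal free resolution of $I$ is the truncation $\mathrm{EN}_{\ge 1}$ with $\mathrm{EN}_1$ placed in homological degree $0$. Every internal differential of this truncation is given by contraction with $\phi$, so each matrix entry is $\pm a_{ij}$ for some $i,j$; the maximal minors occur only in the augmentation $\mathrm{EN}_1\to I$, which is not part of the complex. Hence passing to the linear part simply replaces $A$ by its linearization $A^{\lin}$, i.e. the linear part of the resolution of $I$ is exactly $\mathrm{EN}_{\ge 1}(A^{\lin})$. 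Invoking Theorem \ref{lem:linearization}, $I$ is componentwise linear if and only if $\mathrm{EN}_{\ge 1}(A^{\lin})$ is acyclic, which is the vanishing of the homology of $\mathrm{EN}_{\bullet}(A^{\lin})$ in all homological degrees $\ge 2$. By the depth sensitivity of the Eagon-Northcott complex (the Buchsbaum-Eisenbud acyclicity criterion) this is equivalent to $\grade I_m(A^{\lin})\ge c-1$. The whole theorem thus reduces to showing that $\grade I_m(A^{\lin})\ge c-1$ holds if and only if condition (ii) or (iii) is satisfied.

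It then remains to translate this grade condition using the \emph{linear positions}, the pairs $(i,j)$ with $r_i+s_j=1$. Since $r_i+s_j$ is nondecreasing in $i$ and nonincreasing in $j$, these positions form a monotone staircase. For $c=2$ the complex $\mathrm{EN}_{\ge 1}(A^{\lin})$ is just $0\to S^{m}\xrightarrow{A^{\lin}}S^{m+1}$, so acyclicity means $A^{\lin}$ is injective, i.e. $\grade I_m(A^{\lin})\ge 1$; a monotone, Hall-type matching argument shows that the staircase admits a system of distinct column representatives exactly when $r_i+s_i=1$ for $i=1,\dots,m$, which is condition (ii). For $c\ge 3$ I would show that $\grade I_m(A^{\lin})\ge c-1$ forces, and is forced by, the vanishing of all degree $\ge 2$ entries outside the last row. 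Indeed, if rows $1,\dots,m+c-2$ are entirely linear, the rigidity that $r_i+s_j\equiv 1$ on a full rectangle collapses those rows and columns to constant degrees $\rho,\sigma$ with $\rho+\sigma=1$, so $A^{\lin}$ contains a generic $(m+c-2)\times m$ block of linear forms whose maximal minors already have grade $c-1$; conversely a degree $\ge 2$ entry in one of these rows punches a hole in the staircase that must drop the grade below $c-1$.

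I expect the main obstacle to be precisely this last grade computation for $c\ge 3$: establishing $\grade I_m(A^{\lin})\ge c-1$ under (iii) and, more delicately, the strict inequality $\grade I_m(A^{\lin})<c-1$ whenever (iii) fails, which is a depth analysis of the sparse, generic determinantal ideal cut out by the linearized matrix. The monotonicity \eqref{eq:deg-order} is what rigidifies the pattern and makes the final answer clean, but proving the sharp drop in grade when a hole appears in the staircase will require the most care. As a consistency check I would specialize to $m=1$, where $I$ is a complete intersection and the criterion must agree with the classification of componentwise linear Gorenstein ideals in Theorem \ref{thm:comp-lin-Gor}, namely that all generators but one are linear.
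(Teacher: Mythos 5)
Your opening reduction is sound and is in fact identical to the paper's: since the entries of $A$ lie in $\mm$, the Eagon--Northcott complex is the minimal free resolution of $I$, its linear part is the Eagon--Northcott complex of $A^{\lin}$, and Theorem \ref{lem:linearization} plus Buchsbaum--Eisenbud give that $I$ is componentwise linear if and only if $\htt I_m(A^{\lin}) \geq c-1$; this is precisely Proposition \ref{lem:crit-stand-det}. The gap is in everything after that, and it is not merely a matter of missing routine details: the two arguments you sketch for translating the height condition into conditions (ii) and (iii) are flawed because neither ever uses the hypothesis that $I_m(A)$ has the \emph{expected} height $c$. For $c=2$, your Hall-type matching argument proves only one direction. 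If some diagonal degree is at least two, the monotone staircase produces a zero block large enough to kill every maximal minor of $A^{\lin}$ --- fine, that is Lemma \ref{lem:main-diag}. But the converse, ``the pattern admits a transversal of linear positions, hence some maximal minor of $A^{\lin}$ is nonzero,'' is simply false for an arbitrary matrix of linear forms: a matrix all of whose entries equal $x_1$ has a transversal in every position and identically vanishing $2$-minors. What excludes such degeneration is the standard determinantal hypothesis, and the paper's Lemma \ref{lem:block-factor} is exactly this step: homogeneity forces a zero \emph{above} the diagonal next to any subdiagonal entry of degree $\geq 2$, so $\overline{A}$ (delete the last row) is block lower triangular with all-linear diagonal blocks $A_j$, and the Hilbert--Burch theorem (valid because $\htt I_m(A)=2$ and the entries lie in $\mm$) guarantees $\det \overline{A} = \prod_j \det A_j \neq 0$; only then does $\det \overline{A^{\lin}} = \prod_j \det A_j \neq 0$ follow.

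The situation for $c \geq 3$ is the same but more serious. Your claim that under (iii) the top $(m+c-2)\times m$ block $B$ is ``generic'' and so its maximal minors ``already have grade $c-1$'' is unjustified and wrongly motivated: $B$ is not generic, and a matrix of linear forms can have maximal minors of grade far below the expected value. The correct input is again the hypothesis $\htt I_m(A) = c$, from which Bruns's generalized principal ideal theorem yields $\htt I_m(B) = c-1$, and then $\htt I_m(A^{\lin}) \geq \htt I_m(B^{\lin}) = \htt I_m(B) = c-1$. Conversely, your statement that a degree-$\geq 2$ entry above the last row ``punches a hole that must drop the grade below $c-1$'' is exactly what needs proof; in the paper it is carried by Lemma \ref{lem:main-diag} (a nonlinear diagonal entry kills all maximal minors of $A^{\lin}$), Lemma \ref{lem:side-diag} (a nonlinear subdiagonal entry forces $\htt I_m(A^{\lin}) \leq 1$), and the observation that two nonlinear rows make the last two rows of $A^{\lin}$ zero, so that $I_m(A^{\lin})$ comes from an $(m+c-3)\times m$ matrix and has height at most $c-2$. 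You flag these grade computations yourself as ``the main obstacle,'' but they are the mathematical heart of the theorem; as it stands, your proposal establishes the reduction to the height criterion and nothing beyond it.
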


We will divide the proof of Theorem \ref{thm:stand-det} into several
steps. We  begin by deriving  necessary conditions.

\begin{lem}
  \label{lem:main-diag}
Let $A$ be an $(m+1) \times m$ homogeneous matrix with entries in
$\mm$. Assume that the degree of one of the entries on the main
diagonal of $A$ is at least two. Then all the maximal minors of its
linearization $A^{\lin}$ vanish.
\end{lem}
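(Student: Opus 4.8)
The plan is to translate the vanishing of the maximal minors into a statement about a large zero submatrix, and then to invoke the elementary fact that a square matrix with a sufficiently large block of zeros is singular. Recall that $A^{\lin}$ is obtained from $A$ by replacing every entry of degree at least two by zero, so that only the degree-one entries (linear forms) survive. The whole argument is driven by the monotonicity recorded in \eqref{eq:deg-order}, which I would use to show that a single diagonal entry of high degree drags an entire lower-left block down to zero in $A^{\lin}$.

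First I would propagate the hypothesis. Suppose $\deg a_{k,k}\ge 2$ for some $k\in\{1,\dots,m\}$. Conditions \eqref{eq:deg-order} say that the degrees are weakly increasing as one moves downward along a column and weakly increasing as one moves leftward along a row. Hence for every $i\ge k$ and every $j\le k$,
$$
\deg a_{i,j}\ \ge\ \deg a_{i,k}\ \ge\ \deg a_{k,k}\ \ge\ 2,
$$
so each such entry is replaced by zero in $A^{\lin}$. Consequently $A^{\lin}$ has a zero submatrix occupying the rows $\{k,k+1,\dots,m+1\}$ and the columns $\{1,2,\dots,k\}$; this block has $m-k+2$ rows and $k$ columns.

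Finally I would bound the minors. A maximal minor of the $(m+1)\times m$ matrix $A^{\lin}$ is the determinant of the $m\times m$ submatrix obtained by deleting one row. Deleting a single row removes at most one of the rows $\{k,\dots,m+1\}$, so the resulting $m\times m$ matrix still contains a zero submatrix having at least $m-k+1$ rows and all $k$ of the columns $\{1,\dots,k\}$. Since $(m-k+1)+k=m+1>m$, these (at least $m-k+1$) rows have nonzero entries supported in only $m-k$ columns, hence are linearly dependent; equivalently, by the standard Frobenius--König argument, an $m\times m$ matrix with a $p\times q$ zero block satisfying $p+q>m$ is singular. Therefore every maximal minor of $A^{\lin}$ vanishes, as claimed. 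The only point requiring genuine care is the index bookkeeping that guarantees the zero block survives the deletion of a row with exactly one column to spare; the underlying linear-algebra fact is routine.
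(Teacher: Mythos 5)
Your proof is correct, and its core observation coincides with the paper's: by the standing monotonicity assumption \eqref{eq:deg-order}, a single diagonal entry of degree at least two forces the whole lower-left block of $A^{\lin}$ (rows $k,\dots,m+1$, columns $1,\dots,k$) to be zero. Where you genuinely diverge is in how the vanishing of the minors is extracted from that block. The paper fixes $s$ minimal with $\deg a_{s+1,s+1}\geq 2$ and then distinguishes two cases according to whether the deleted row lies among the first $s$ rows or not; in each case the surviving $m\times m$ matrix is block triangular, so its determinant factors as a product of two determinants, one of which vanishes (a zero column in one factor in the first case, a zero row in the other factor in the second case). You instead give a single uniform argument with no case analysis: after deleting any one row, the zero block still has $p$ rows and $q$ columns with $p+q\geq m+1>m$, so the $m\times m$ submatrix is singular — by the Frobenius--K\"onig criterion, by a rank count over the fraction field of $S$, or most robustly (since the entries are polynomials, not field elements) by noting that every term of the permutation expansion of the determinant must use an entry of the zero block. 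Your route is shorter and cleaner for this lemma. What the paper's block factorization buys is reusability: in Lemma \ref{lem:side-diag} the very same decomposition is pushed further to show that all maximal minors are \emph{divisible} by $\det B_1$, a divisibility statement that a pure singularity criterion cannot deliver, and Lemma \ref{lem:block-factor} again trades on explicit block factorizations of the minors. So your argument is the more economical proof of this particular statement, while the paper's is the template for the arguments that follow it.
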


\begin{proof}
Let $s \geq 0$ be the least integer such that $\deg a_{s+1, s+1}
\geq 2$.  Then the linearization of $A$ is of the form
\[
A^{\lin} = \begin{bmatrix}
  B_1 & B_2 \\
  0   & B_3
\end{bmatrix},
\]
where $B_1$ is an $s \times s$ matrix and all entries in the first
column  of $B_3$ are zero. Consider now the square matrix
$\widetilde{A^{lin}}$ that is obtained from $A^{\lin}$ by deleting
$c-1$ of its last $c-1$ rows:
\[
\widetilde{A^{\lin}} = \begin{bmatrix}
  B_1 & B_2 \\
  0   & \widetilde{B_3}
\end{bmatrix}.
\]
Then $\widetilde{B_3}$ is a square matrix whose first column entries
are  all zero. Thus, we get \[ \det \widetilde{A^{\lin}} = \det B_1
\cdot \det \widetilde{B_3} = 0.
\]
Next, we consider a matrix $\overline{A^{\lin}}$ obtained from
$A^{\lin}$ by deleting $c-1$ rows  such that $t$ of these rows ($1
\leq t \leq s$) are among the first $s$ rows of $A^{\lin}$. It is of
the form
\[
\overline{A^{\lin}} = \begin{bmatrix}
  \overline{B_1} & \overline{B_2} \\
  0   & \overline{B_3}
\end{bmatrix},
\]
where $\overline{B_1}$ is an $s\times s$ matrix whose last row has
only  zero entries. It follows that
\[
\det \overline{A^{\lin}} = \det \overline{B_1} \cdot \det \overline{B_3}
 = 0.
\]
Thus, we have shown that all $m$-minors of $A^{\lin}$ vanish.
\end{proof}

We can push these arguments further when we have a weaker
assumption.

\begin{lem}
  \label{lem:side-diag}
Let $A$ be an $(m+1) \times m$ homogeneous matrix with entries in
$\mm$. Assume that one  of the entries on the diagonal below  the
main diagonal of $A$ is at least two. Then the maximal minors of its
linearization $A^{\lin}$ generate an ideal whose height is at most
one.
\end{lem}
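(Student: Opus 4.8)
The plan is to use the proof of Lemma~\ref{lem:main-diag} as a template, but to notice that the subdiagonal hypothesis yields a strictly weaker block decomposition of $A^{\lin}$: rather than forcing every maximal minor to vanish, it only confines the ideal of maximal minors inside a principal ideal. So I would reduce the statement to the containment $I_m(A^{\lin}) \subseteq (\det B_1)$ for a suitable square corner block $B_1$, and then invoke Krull's principal ideal theorem.

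First I would do the degree bookkeeping. Pick $s$ with $1 \le s \le m$ and $\deg a_{s+1,s} \ge 2$. Since by \eqref{eq:deg-order} the degrees do not increase from left to right along a row and do not increase from bottom to top along a column, for every $i \ge s+1$ and $j \le s$ one gets
\[
\deg a_{i,j} \ge \deg a_{i,s} \ge \deg a_{s+1,s} \ge 2 .
\]
Thus the entire lower-left block of $A$ (rows $s+1,\dots,m+1$ and columns $1,\dots,s$) has degrees at least two and is annihilated by linearization, giving
\[
A^{\lin} = \begin{bmatrix} B_1 & B_2 \\ 0 & B_3 \end{bmatrix},
\]
where $B_1$ is the $s\times s$ upper-left block and $B_3$ is $(m+1-s)\times(m-s)$.

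Next I would analyze the maximal ($m\times m$) minors, obtained by deleting one row. If the deleted row is one of the first $s$, then the first $s$ columns of the resulting square matrix have all their entries in only $s-1$ rows, so they are linearly dependent and the minor is zero. If the deleted row is among the last $m+1-s$, then the matrix stays block upper triangular with square diagonal blocks $B_1$ and a square $\widetilde{B_3}$, so the minor equals $\det B_1 \cdot \det \widetilde{B_3}$. In both cases the minor is a multiple of $\det B_1$, whence $I_m(A^{\lin}) \subseteq (\det B_1)$.

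Finally, $\det B_1$ is a form of degree $s \ge 1$ contained in $\mm$, so $(\det B_1)$ is principal of height at most one; since the height of an ideal never exceeds the height of an ideal containing it, this gives $\htt I_m(A^{\lin}) \le 1$. The genuine obstacle, and the place where the argument departs from Lemma~\ref{lem:main-diag}, is that the subdiagonal hypothesis does not force a zero column inside $B_3$, so the bottom-block minors need not vanish; the correct move is to bound rather than annihilate, routing everything through the principal ideal $(\det B_1)$. I would also record the degenerate cases: if $\det B_1 = 0$ then $I_m(A^{\lin}) = 0$ has height $0$, and if $s = m$ then $B_3$ is empty and $I_m(A^{\lin}) = (\det B_1)$ outright.
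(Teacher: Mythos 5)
Your proof is correct and takes essentially the same route as the paper's: the same block decomposition of $A^{\lin}$, the same case split on whether the deleted row lies among the first $s$ rows or not, and the same conclusion that every maximal minor is a multiple of $\det B_1$, so that $I_m(A^{\lin}) \subseteq (\det B_1)$ forces height at most one. Your additions --- the explicit degree bookkeeping behind the vanishing lower-left block and the remarks on the degenerate cases $\det B_1 = 0$ and $s = m$ --- merely make explicit what the paper leaves implicit.
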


\begin{proof}
Let $s \ge 1$ be the least integer such that $\deg a_{s+1, s} \geq
2$. Then the linearization of $A$ is of the form
\[
A^{\lin} = \begin{bmatrix}
  B_1  &  B_2 \\
  0   & B_3
\end{bmatrix},
\]
where $B_1$ is an $s \times s$ matrix and $B_3$ is an $(m+c-1-s)
\times (m-s)$ matrix. Considering the $m$-minors of $A^{\lin}$,  we
distinguish two cases depending on the $c-1$ rows that are deleted
from $A^{\lin}$ to obtain a square matrix. First assume that one of
the deleted rows has an index which is at most $s$. Then, we get as
in the proof of Lemma \ref{lem:main-diag} that the corresponding
$m$-minor vanishes. Second, assume that only rows whose index is at
least $s+1$ have been deleted. Then $\det B_1$ divides the resulting
$m$-minor.

Thus, we have shown that all $m$-minors of $A^{\lin}$ are divisible
by $\det B_1$, and our claim follows.
\end{proof}

We also need the following observation.

\begin{lem}
  \label{lem:block-factor}
Let $A$ be an $(m+1) \times m$ homogeneous matrix with entries in
$\mm$ such that $\htt I_m(A) = 2$. Assume that each entry on the
main diagonal of $A$ has degree one and that the degree of one entry
on the  diagonal below the main diagonal of $A$ is at least two.
Then $A^{\lin}$ has a non-vanishing  maximal minor.
\end{lem}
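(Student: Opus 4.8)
The plan is to exhibit one nonvanishing maximal minor of $A^{\lin}$ directly, by deleting its last row, and to identify the resulting minor with a genuine maximal minor of $A$. Write $A'$ for the $m\times m$ submatrix of $A$ formed by its first $m$ rows, so that $\det A'$ is, up to sign, the maximal minor $\Delta_{m+1}$ of $A$ obtained by deleting the last row. First I would record that $\det A'$ is homogeneous of degree $m$: since $A$ is a homogeneous matrix, all the nonzero products $\prod_{j=1}^m a_{j,\sigma(j)}$ occurring in $\det A'=\sum_\sigma\operatorname{sgn}(\sigma)\prod_{j=1}^m a_{j,\sigma(j)}$ have one and the same degree, and the diagonal term $\prod_{j=1}^m a_{j,j}$, a product of $m$ linear forms by hypothesis, exhibits this common degree as $m$.

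The key step is that linearization leaves this determinant unchanged. Since every entry of $A'$ lies in $\mm$ and hence has degree at least one, in any nonzero term $\prod_{j} a_{j,\sigma(j)}$ of $\det A'$ the $m$ factors have degrees summing to $m$, so each factor must be a linear form. Therefore replacing every entry of degree at least two by zero, that is, passing from $A'$ to $(A')^{\lin}$, deletes no term of $\det A'$, and $\det A'=\det (A')^{\lin}$. Because $(A')^{\lin}$ is precisely $A^{\lin}$ with its last row removed, $\det A'$ is literally a maximal minor of $A^{\lin}$. (Note that the subdiagonal entry of degree at least two provided by the hypothesis is not actually needed for this reduction.)

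It then remains to check that $\det A'\neq 0$, and this is where the assumption $\htt I_m(A)=2$ enters. Since $S$ is Cohen--Macaulay, $\grade I_m(A)=\htt I_m(A)=2$, which is the maximal possible value for an $(m+1)\times m$ matrix; hence by the Hilbert--Burch theorem (see, e.g., \cite{BrVe}) the complex
\[
0\longrightarrow S^{m}\xrightarrow{\ A\ }S^{m+1}\xrightarrow{\ (\Delta_1,\dots,\Delta_{m+1})\ }S
\]
is exact with cokernel $S/I_m(A)$. I would then argue that $\Delta_{m+1}\neq 0$ as follows: otherwise the standard basis vector $e_{m+1}$ lies in $\Ker(\Delta_1,\dots,\Delta_{m+1})=\Image A$, forcing $1=\sum_{j}a_{m+1,j}v_j\in\mm$ for suitable $v_j\in S$, since every $a_{m+1,j}\in\mm$, which is absurd. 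Thus $\det A'=\pm\Delta_{m+1}\neq 0$, and $A^{\lin}$ has a nonvanishing maximal minor.

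The only real subtlety I anticipate lies in the identification carried out in the second paragraph: it is tempting but false in general to expect the maximal minors of $A^{\lin}$ to be the ``linear parts'' of the maximal minors of $A$, and the argument goes through only because $\det A'$ happens to be concentrated in the single degree $m$, so that linearization fixes it. Once this degree bookkeeping is in place, the nonvanishing is an immediate consequence of the exactness (equivalently, the minimality) of the Hilbert--Burch resolution, and I expect no further obstacle.
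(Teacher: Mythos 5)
Your proof is correct, and its skeleton coincides with the paper's: delete the last row of $A$, show that linearization does not change the determinant of the resulting square matrix $\overline{A}$ (your $A'$), and deduce $\det\overline{A}\neq 0$ from the Hilbert--Burch complex together with the fact that all entries of $A$ lie in $\mm$. The genuine difference lies in the middle step. The paper argues structurally: the subdiagonal entry of degree at least two forces $a_{i,i+1}=0$ by homogeneity, and the standing ordering convention \eqref{eq:deg-order} then puts $\overline{A}$ into block lower-triangular form whose diagonal blocks $A_1,\ldots,A_t$ consist entirely of linear entries, whence $\det\overline{A^{\lin}}=\det A_1\cdots\det A_t=\det\overline{A}$. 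You instead count degrees term by term: by homogeneity every term of $\det\overline{A}$ has the same degree $\sum_i\deg a_{ii}=m$, and since nonzero entries of $\mm$ have degree at least one, each of the $m$ factors of a nonzero term must have degree exactly one, so linearization deletes no term. (One cosmetic point: the diagonal term you use as a witness could in principle be the zero polynomial; the common degree is more safely computed from the degree matrix, since $\sum_i \deg a_{i,\sigma(i)}$ is independent of $\sigma$ for a homogeneous matrix.) Your route is more elementary and slightly more general: it uses neither \eqref{eq:deg-order} nor the hypothesis on the subdiagonal entry, and your parenthetical remark that the latter is superfluous is correct --- it appears in the statement only because of how the lemma is invoked in the proof of Theorem \ref{thm:stand-det}. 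You are also more explicit than the paper about why the minor is nonzero: the paper merely cites Hilbert--Burch for the nontriviality of all maximal minors, which is exactly the $e_{m+1}\in\Image A$ contradiction you spell out. What the paper's block decomposition buys --- an explicit factorization of the surviving minor --- is not needed for this lemma.
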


\begin{proof}
Denote by $\overline{A}$ the square matrix obtained from $A$ by deleting its last row.
Let $a_{i+1, i}$ be an entry whose degree is at least two. If $i < m$, then we get by homogeneity of $A$ that
\[
2 = \deg a_{i, i} + \deg a_{i+1, i+1} =
\deg a_{i+1, i} + \deg a_{i, i+1}.
\]
We conclude that $\deg a_{i, i+1} \leq 0$. Thus, $a_{i, i+1} = 0$.
Hence, using also Condition \eqref{eq:deg-order}, the matrix
$\overline{A}$ has the following shape
\[
\overline{A} =\begin{bmatrix}
A_1 & & 0 \\
    & \ddots \\
*   & & A_t
\end{bmatrix},
\]
where each matrix $A_j$ is a square matrix whose entries on the
diagonal below the main diagonal all have degree one. As $I_m (A)$
has the expected height, each of the maximal minors of $A$ is
non-trivial by the Hilbert-Burch theorem. It follows that
\[
0 \neq \det \overline{A}= \det A_1 \cdot \ldots  \cdot \det A_t.
\]
Since the entries on the main diagonal of $A_j$ and the diagonal
below have degree one, all entries of $A_j$ have degree one, and
thus $A_j^{\lin} = A_j$. This provides,
\[
\det \overline{A^{\lin}} = \det A_1 \cdot \ldots  \cdot \det A_t,
\]
which we have seen to be non-trivial.
\end{proof}

Our proof of Theorem \ref{thm:stand-det} is based on the following
result.

\begin{prop}
  \label{lem:crit-stand-det}
Let $I = I_m (A)$ be a standard determinantal ideal  of height $c$,
where $A$ is an $(m+c-1) \times m$ homogeneous matrix with entries
in $\mm$. Then $I$ is componentwise linear if and only if the height
of $I_m (A^{\lin})$ is at least $c-1$.
\end{prop}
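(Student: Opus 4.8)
The plan is to use the homological criterion for componentwise linearity, namely Theorem \ref{lem:linearization}, which says that $I=I_m(A)$ is componentwise linear if and only if the linear part of its minimal graded free resolution is acyclic. For a standard determinantal ideal the minimal free resolution is the Eagon--Northcott complex built on the matrix $A$, and this is the essential structural input: it gives an \emph{explicit}, characteristic-free resolution whose differentials are described by (the entries of) $A$ together with Koszul-type maps. The whole point of the construction of the linearization $A^{\lin}$ is that passing to the linear part of the resolution corresponds, at the level of matrices, to replacing $A$ by $A^{\lin}$; so the linear part $F^{\lin}_{\lpnt}$ should itself be the Eagon--Northcott-type complex attached to $A^{\lin}$. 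First I would make this identification precise: write down the Eagon--Northcott complex of $A$, record the degrees of the differentials, and verify that deleting all entries of degree $\ge 2$ (as in the Construction) produces exactly the analogous complex associated to $A^{\lin}$.

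Once the linear part is identified with a complex governed by $A^{\lin}$, the task reduces to deciding when that complex is acyclic. Here I would invoke the standard acyclicity criterion for Eagon--Northcott-type complexes (the Buchsbaum--Eisenbud exactness criterion, or equivalently the depth/height conditions on the ideals of minors of $A^{\lin}$): such a complex resolving $S/I_m(A^{\lin})$ is acyclic precisely when the grade of $I_m(A^{\lin})$ attains the generic value, i.e.\ the maximal possible height $c$ for a matrix of that shape. However, the relevant statement is that the \emph{linear strand} is acyclic, and the linear strand of the Eagon--Northcott complex has one fewer step of sensitivity than the full complex. The key computation is therefore to match the length of the linear part against the height bound, and this is exactly where the threshold $c-1$ (rather than $c$) should emerge: acyclicity of the linear part requires $\htt I_m(A^{\lin}) \ge c-1$, one less than the height $c$ of the ideal $I_m(A)$ itself.

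So the core of the argument is: (a) identify $F^{\lin}_{\lpnt}$ with the Eagon--Northcott complex of $A^{\lin}$; (b) apply the Buchsbaum--Eisenbud acyclicity criterion to that complex; and (c) translate the resulting grade condition into the inequality $\htt I_m(A^{\lin}) \ge c-1$, using that $\grade = \htt$ over the Cohen--Macaulay ring $S$. For the converse direction, if $\htt I_m(A^{\lin}) < c-1$ then the acyclicity criterion fails at the relevant spot, the linear part has nonzero homology, and Theorem \ref{lem:linearization} forces $I$ to fail to be componentwise linear. I would present this as a clean biconditional tracking the single numerical threshold.

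The step I expect to be the main obstacle is (a), the precise matching of the linear part of the \emph{minimal} resolution with the Eagon--Northcott complex of $A^{\lin}$. Two subtleties arise. First, the Eagon--Northcott complex is minimal exactly because the entries of $A$ lie in $\mm$, but after linearization one must confirm that the degree bookkeeping is such that replacing each entry of degree $\ge 2$ by zero genuinely yields the linear-strand differential rather than some proper sub- or quotient-complex; this requires checking that the non-Koszul differentials of the Eagon--Northcott complex have all their higher-degree contributions concentrated exactly in the entries inherited from $A$. Second, one must be careful that the shape of the complex (the Koszul symmetric-power factors) does not itself contribute spurious linear or nonlinear maps that survive or die under linearization. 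Once these degree considerations are pinned down, the acyclicity input is standard and the numerical bookkeeping leading to the threshold $c-1$ is routine.
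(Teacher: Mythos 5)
Your proposal follows essentially the same route as the paper's proof: both invoke Theorem \ref{lem:linearization}, identify the linear part of the Eagon--Northcott resolution of $I$ with the Eagon--Northcott complex of $A^{\lin}$ (the paper settles your step (a) by noting that every entry of the differentials is either zero or an entry of $A$), and then apply the Buchsbaum--Eisenbud acyclicity criterion, using Eisenbud's Theorem A2.10.b to collapse all the grade conditions into the single inequality $\htt I_m(A^{\lin}) \geq c-1$. Your explanation of the threshold $c-1$ (the linear part is the deleted complex resolving $I$ rather than $S/I$, hence one step shorter) is exactly the bookkeeping the paper carries out.
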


\begin{proof}
A minimal graded free resolution of $I$ is given by the
Eagon-Northcott complex  ${F}_{\lpnt}$ (see, e.g., \cite[Theorem
2.16]{BrVe}). Denote by $B_1,\ldots,B_{c-1}$ the matrices that
describe the maps in the deleted resolution of $S/I$. For $i \in
\{1,\ldots,c-1\}$, consider the matrix $B_i$. If $r_i$ is the rank
of $B_i$, then $\Rad I_{r_i} (B_i) = \Rad I$, where $I_{r_i} (B_i)$
is the ideal generated by the $r_i$-minors of $B_i$.  Furthermore,
the construction of the Eagon-Northcott complex (see, e.g.,
\cite{BrVe} or \cite{E}) provides that each entry of $B_i$ is either
zero or equal to an entry of $A$. It also follows that the maps in
the deleted Eagon-Northcott complex associated to the matrix
$A^{\lin}$ are given by the matrices
$B_1^{\lin},\ldots,B_{c-1}^{\lin}$. Thus, Theorem A2.10.b in
\cite{E} provides, for all $i$,
\[
\Rad I_{r_i} (B_i^{\lin}) = \Rad I_m (A^{\lin}).
\]
Using the Buchsbaum-Eisenbud acyclicity criterion, we see that
${F}_{\lpnt}^{\lin}$ is acyclic if and only if $\htt I_m (A^{\lin})
\geq c-1$. Now the claim follows by Theorem \ref{lem:linearization}.
\end{proof}

It is indeed possible that the height of $I_m (A^{\lin})$ is $c-1$
and $I_m (A)$ is componentwise linear of height $c$.

\begin{ex}
  \label{exa:codim-drops}
Consider the ideal $I = (x^2, x y, y^3)$ in $K[x, y]$. It is
strongly stable, and thus it is componentwise linear. Notice that $I
= I_2 (A)$ is standard determinantal of height two, where
\[
A = \begin{bmatrix}
  y & 0 \\
  -x & y^2 \\
  0 & - x
\end{bmatrix}.
\]
Then
\[
A^{\lin} = \begin{bmatrix}
  y & 0 \\
  -x & 0 \\
  0 & - x
\end{bmatrix},
\]
and $I_2 (A^{\lin}) = (x^2, x y)$ has height one.
\end{ex}

We are ready to establish the main result of this section.

\begin{proof}[Proof of Theorem \ref{thm:stand-det}]
Note that every unmixed ideal of height one is principal, thus
componentwise linear. Thus, let now $c \ge 2$.

Recall that we may assume that all entries of $A$ are in $\mm$ and
that we have ordered the entries of $A$ according to their degrees.
By assumption, $I$ has the expected height, which then implies that
all entries on the main diagonal of $A$ have positive degrees.
Otherwise, a calculation shows that the minor using the first $m$
rows of $A$ is zero; a contradiction. We now distinguish two cases.

First assume that the height $c$ is two. If $I$ is componentwise
linear, then, by Proposition \ref{lem:crit-stand-det}, the height of
the  ideal $I_m (A^{\lin})$ is at least one. Hence, Lemma
\ref{lem:main-diag} shows that the degrees of the entries on the
main diagonal of $A$ are at most one, so they must be equal to one,
as claimed.

For the converse, assume that this degree condition is  satisfied.
If all entries of $A$ have degree one, then $I$ has a linear free
resolution, thus it is componentwise linear. If all the entries on
the diagonal below the main diagonal of $A$ have degree one, then
the homogeneity of $A$ implies that all its entries have degree one.
Thus, we are left to consider the case, where one of the entries on
the diagonal below the main diagonal of $A$ has a degree which is at
least two. Hence, Lemma \ref{lem:block-factor} applies and shows
$I_m (A^{\lin}) \neq 0$. Since $S$ is an integral domain,
Proposition \ref{lem:crit-stand-det} implies that again $I$ must be
componentwise linear. This concludes the argument if $c=2$.

Second, assume that $c \geq 3$. If $I$ is componentwise linear,
then,  by Proposition \ref{lem:crit-stand-det}, $\htt I_m(A^{\lin})
\geq c-1 \geq 2$. Thus, Lemmas \ref{lem:main-diag} and
\ref{lem:side-diag} provide that the entries on the main diagonal of
$A$ and the diagonal below it all have degree one. Now, the
homogeneity of $A$ implies that each entry in the first $m+1$ rows
of $A$ has degree one and that the entries in any other row of $A$
have the same degree, depending on the row. If more than one row of
$A$ contains entries whose degrees are at least two, then the two
last rows of $A^{\lin}$ consists of zero. It follows that $I_m
(A^{\lin})$ is the ideal generated by the $m$-minors of an $(m+c-3)
\times m$ matrix. Thus, $\htt I_m (A^{\lin}) \leq c-2$, which is a
contradiction by Proposition \ref{lem:crit-stand-det} and
establishes the necessity of the claimed degree conditions.

Finally, assume that $A$ satisfies these degree conditions. This
means  that $B^{\lin} = B$, where $B$ is the matrix obtained from
$A$ by deleting its last row. Bruns's generalized principal ideal
theorem \cite{Bruns} provides $\htt I_m (B) = c-1$ (see also Remark
2.2 in \cite{BrVe}). Therefore, we get
\[
\htt I_m (A^{\lin}) \geq \htt I_m (B^{\lin}) = \htt I_m (B) = c-1.
\]
Hence $I$ is componentwise linear by  Proposition
\ref{lem:crit-stand-det}, and the argument is complete.
\end{proof}

\begin{rem}
  \label{rem:two-strands}
Theorem \ref{thm:stand-det} provides in particular that the minimal
graded free resolution of a componentwise linear standard
determinantal ideal whose height is at least three has at most two
strands.
\end{rem}

With respect to Theorem  \ref{thm:gin},  it is interesting to find a
strongly stable ideal that has the same graded Betti numbers as a
given componentwise linear standard determinantal ideal. If the
height is two, the answer is clear because each strongly stable
Cohen-Macaulay ideal of height two is standard determinantal. If the
height is larger, then there are restrictions.

\begin{prop}
  \label{prop:stable-ideal-for-stand-det}
Adopt the assumptions of Theorem \ref{thm:stand-det}. Set $e := \deg
a_{m,m+c}$. If $c \geq 3$ and $I$ is componentwise linear, then $I$
has the same graded Betti numbers as the strongly stable ideal
\[
J = (x_1,\ldots,x_c)^{m-1} \cdot (x_1,\ldots,x_{c-1}, x_c^e).
\]
\end{prop}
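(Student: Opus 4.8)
The plan is to prove the statement by computing both families of graded Betti numbers explicitly and matching them strand by strand: those of $I$ from its minimal free resolution, and those of $J$ from the Eliahou--Kervaire formula (Theorem \ref{thm:Eliahou-Kervaire}).

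First I would pin down the degree matrix in the componentwise linear case. By Theorem \ref{thm:stand-det}(iii) every entry of $A$ outside the last row has degree one; since $A$ is homogeneous, this forces all the column twists to coincide, and hence every entry of the last row has one and the same degree $e=\deg a_{m+c-1,m}$. After normalizing, the transpose $A^{t}$ therefore presents a map $G\to F$ with $G=S^m$ and $F=S(-1)^{m+c-2}\oplus S(-e)$.

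Because $I$ has the expected height $c$, the Eagon--Northcott complex of $A$ is its minimal graded free resolution, so its graded Betti numbers can be read off directly. The $(k+1)$-st term is $(\Sym_k G)^{*}\otimes\wedge^{m+k}F$ (the factor $\wedge^m G$ is trivial here, as $G=S^m$), and decomposing $\wedge^{m+k}F$ according to whether the summand $S(-e)$ is used splits the complex into two strands, giving
\[
\beta^S_{k+1,\,m+k}(S/I)=\binom{m+c-2}{m+k}\binom{m+k-1}{k},\qquad
\beta^S_{k+1,\,m+k-1+e}(S/I)=\binom{m+c-2}{m+k-1}\binom{m+k-1}{k}.
\]
In particular $I$ has $\binom{m+c-2}{m}$ minimal generators in degree $m$ and $\binom{m+c-2}{m-1}$ in degree $m-1+e$, in agreement with the two-strand phenomenon of Remark \ref{rem:two-strands}.

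On the monomial side I would first verify that $J$ is strongly stable and list its minimal generators, recording for each its largest variable index $\max(\cdot)$ and separating the degree-$m$ generators from the degree-$(m-1+e)$ ones. Theorem \ref{thm:Eliahou-Kervaire} then expresses each $\beta^S_{i,i+d}(S/J)$ as a sum of binomials $\binom{\max(m_k)-1}{i-1}$ over the minimal generators in the relevant degree, and the proof is completed by checking that these sums reproduce the two formulas above. For the upper strand this reduces to a direct cancellation of factorials (all high-degree generators carry the same $\max$), while for the lower strand it is a Chu--Vandermonde-type summation of $\binom{\max-1}{i}$ over the occurring values of $\max$. The step I expect to be the main obstacle is precisely this monomial bookkeeping: extracting the \emph{minimal} generators of the product ideal in each degree---discarding the many non-minimal products---together with their exact distribution of $\max$-values, since it is this distribution that must match the Eagon--Northcott binomials. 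Once the generating set and its statistics are fixed, the two identities are routine; the degenerate case $e=1$, where the two strands collapse into a single linear resolution, should be treated separately.
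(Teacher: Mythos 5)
Your strategy coincides with the paper's own proof, which is nothing more than the assertion that $J$ is strongly stable plus ``compare the Eagon--Northcott Betti numbers of $I$ with the Eliahou--Kervaire Betti numbers of $J$,'' and your Eagon--Northcott half is correct: two strands, with $\binom{m+c-2}{m}$ minimal generators of degree $m$ and $\binom{m+c-2}{m-1}$ of degree $m-1+e$. The fatal problem sits exactly in the step you postpone as ``monomial bookkeeping'': it cannot be completed, because the proposition as printed is false once $m\ge 2$ and $e\ge 2$. For $e\ge 2$ the minimal generators of $J=(x_1,\dots,x_c)^{m-1}(x_1,\dots,x_{c-1},x_c^e)$ are \emph{all} monomials of degree $m$ in $x_1,\dots,x_c$ except $x_c^m$, together with the single monomial $x_c^{m-1+e}$ (every other product $u\,x_c^e$ is divisible by a degree-$m$ generator). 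Hence
\begin{equation*}
\beta_{0,m}(J)=\binom{m+c-1}{m}-1,\qquad \beta_{0,m-1+e}(J)=1,
\end{equation*}
which does not match your counts for $I$; only the \emph{total} Betti numbers agree (both equal $\binom{m+k-1}{k}\binom{m+c-1}{m+k}$ in homological degree $k$), which is presumably how the slip escaped notice. Concretely, for $m=2$, $c=3$, $e=2$: the ideal $I$ (the $2$-minors of a $4\times 2$ matrix with three linear rows and one quadratic row) has $3$ quadric and $3$ cubic minimal generators and $S/I$ has Hilbert function $1,3,3$, whereas $J=(x_1,x_2,x_3)(x_1,x_2,x_3^2)=(x_1^2,x_1x_2,x_2^2,x_1x_3,x_2x_3,x_3^3)$ has $5$ quadric and $1$ cubic generators and Hilbert function $1,3,1$. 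So no amount of care in the bookkeeping finishes your plan; executed honestly, it refutes the statement rather than proving it. (The printed $J$ is fine in the degenerate cases $e=1$, where it becomes $(x_1,\dots,x_c)^m$, and $m=1$, where it matches Remark \ref{rem:Gor}.)

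Both the statement and your outline are repaired by replacing $J$ with
\begin{equation*}
J'=(x_1,\dots,x_{c-1})^m + x_c^e\,(x_1,\dots,x_c)^{m-1},
\end{equation*}
which is strongly stable, has exactly $\binom{m+c-2}{m}$ minimal generators of degree $m$ (the monomials in $x_1,\dots,x_{c-1}$ alone, so with $\max\le c-1$) and $\binom{m+c-2}{m-1}$ minimal generators of degree $m-1+e$ (the monomials $u\,x_c^e$ with $\deg u=m-1$, all with $\max=c$). With this $J'$ the two verifications run precisely as you predicted: the $e$-strand is a direct factorial cancellation,
\begin{equation*}
\binom{m+c-2}{c-1}\binom{c-1}{k}=\binom{m+c-2}{m+k-1}\binom{m+k-1}{k},
\end{equation*}
and the linear strand is the hockey-stick summation
\begin{equation*}
\sum_{p=1}^{c-1}\binom{m+p-2}{p-1}\binom{p-1}{k}=\binom{m+k-1}{k}\binom{m+c-2}{m+k},
\end{equation*}
matching your Eagon--Northcott formulas strand by strand. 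In short: same method as the paper, correct determinantal half, but the deferred combinatorial half is exactly where the proposition as printed---and therefore any proof of it, including the paper's own sketch---breaks down.
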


\begin{proof}
It is easily checked that the ideal $J$ is strongly stable. Thus,
the claim follows by comparing the graded Betti numbers of the
Eagon-Northcott resolution of $I$ (see, e.g., \cite[Theorem
2.16]{BrVe}) with the ones of the Eliahou-Kervaire resolution of
$J$.
\end{proof}

%
%

\section{Minors of a symmetric matrix}
\label{sec:symmetric}

In this section we combine the approaches in the two previous
sections in order to determine
 the componentwise linear ideals of height three that are
generated by the submaximal minors of a symmetric matrix.

Throughout this section $A$ will denote a homogeneous symmetric $m
\times m$ matrix. Its degree matrix can be written as
\[
\partial A =
\begin{bmatrix}
2 d_1 & d_1 + d_2 & \ldots & d_1 + d_m \\
d_1 + d_2 & 2 d_2 & \ldots & d_2 + d_m \\
\vdots & \vdots &    \ddots      & \vdots \\
d_1 + d_m & d_2 + d_m & \ldots & 2 d_m
\end{bmatrix}
\]
for some half integers $d_1,\ldots,d_m$. We may and will always
assume that
\[
d_1 \leq d_2 \leq \cdots \leq d_m.
\]
The following is the main result of this section.

\begin{thm}
  \label{thm:symm}
Let $A$ be an $m \times m$ ($m \ge 2$) homogeneous symmetric matrix
with entries in $\mm$ such that $I := I_{m-1} (A)$ has height three.
Then $I$ is componentwise linear if and only if either all entries
of $A$ have degree one or $m$ is odd and  the degree matrix $B$ of
$A$ is of the form
\[
\begin{bmatrix}
B_1 & B_2 \\[2pt]
B_2^T & B_3
\end{bmatrix},
\]
where $B_2$ is an $\frac{m-1}{2} \times \frac{m+1}{2}$ matrix whose
entries are all  one, the entries of $B_1$ are all non-positive, and
the entries of $B_3$ are all at least two.
\end{thm}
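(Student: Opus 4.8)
The plan is to mimic the strategy of Section \ref{sec:determinantal}: first isolate a criterion for componentwise linearity in terms of the linearization $A^{\lin}$, and then translate it into the stated conditions on the degree matrix by a combinatorial analysis. For the first step I would record the minimal graded free resolution $F_{\lpnt}$ of $S/I$. Since $I=I_{m-1}(A)$ is Cohen--Macaulay of codimension three, the structure theory of ideals of minors of a symmetric matrix gives that $F_{\lpnt}$ has length three, that its first differential is the row of submaximal minors, and that the two higher differentials are described by matrices whose nonzero entries are (up to sign) entries of $A$. Resolving $I$ instead of $S/I$, the two differentials of the resolution of $I$ are exactly these matrices, so forming the linear part replaces $A$ by $A^{\lin}$ in them. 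Applying the Buchsbaum--Eisenbud acyclicity criterion, together with the fact that the ideals of minors of the linearized differentials all have the same radical as $I_{m-1}(A^{\lin})$ (the symmetric analogue of the computation in Proposition \ref{lem:crit-stand-det}), and invoking Theorem \ref{lem:linearization}, I would obtain the criterion
\[
I \text{ is componentwise linear} \iff \htt I_{m-1}(A^{\lin}) \geq 2 .
\]

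Next I would analyze $A^{\lin}$ combinatorially. As each entry degree $d_i+d_j$ is an integer and the $d_i$ are weakly increasing, the $d_i$ are either all integers or all in $\tfrac12+\Z$. Sorting the indices into the three blocks $d_i<\tfrac12$ (``small''), $d_i=\tfrac12$ (``middle''), and $d_i>\tfrac12$ (``large''), one checks directly that in $A^{\lin}$ the small--small and large--large blocks vanish, the middle block is a generic symmetric matrix of linear forms that is completely decoupled from the rest, and small and large indices are joined only through the degree-one entries. After reordering, $A^{\lin}$ is block--anti--diagonal with $\rank A^{\lin}=2\rank L+b$, where $L$ is the small--large block and $b$ is the size of the (full rank) middle block.

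I would then run the height computation against the criterion. If the middle block is everything ($b=m$), then $A^{\lin}=A$ is a generic symmetric matrix of linear forms, $\htt I_{m-1}(A^{\lin})=3$, and $I$ is componentwise linear; this is the case that all entries have degree one. If $0<b<m$, then $A^{\lin}$ has generic rank at most $m$, and a single determinant (of the middle block when the generic rank is $m-1$, or of the square small--large block when it is $m$) cuts out a codimension-one locus on which the rank drops to $m-2$; hence $\htt I_{m-1}(A^{\lin})\le 1$ and $I$ is not componentwise linear. So componentwise linearity forces $b=0$. With no middle block $\rank A^{\lin}=2\rank L$ is even, so $\htt I_{m-1}(A^{\lin})\geq 2$ forces $m$ odd with generic rank exactly $m-1$; this requires $\{\,|\mathrm{small}|,|\mathrm{large}|\,\}=\{\tfrac{m-1}{2},\tfrac{m+1}{2}\}$ and $L$ of full rank $\tfrac{m-1}{2}$ with maximal minors of height two. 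A ``staircase'' argument in the spirit of Lemmas \ref{lem:main-diag} and \ref{lem:side-diag} then shows that if some small--large entry fails to have degree one, the monotone zero pattern of $L$ makes its maximal minors either rank deficient or divisible by a common factor, dropping the height to at most one; therefore every small--large entry has degree one, which makes $B_2$ the all-ones block and forces all small degrees equal to some $\gamma\le 0$ and all large degrees equal to $\delta=1-\gamma\ge 1$.

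Finally I would use the hypothesis $\htt I_{m-1}(A)=3$ to decide which outer block is the larger one. If the small block were the larger, of size $\tfrac{m+1}{2}$, then already in $A$ the decoupling confines $I_{m-1}(A)$ to the rank variety of the small--large block, a matrix with more rows than columns whose minors define a locus of codimension two, contradicting $\htt I_{m-1}(A)=3$; the case $m=3$, $A=\bigl[\begin{smallmatrix}0&0&u\\0&0&v\\ u&v&q\end{smallmatrix}\bigr]$ with $I_2(A)=(u^2,uv,v^2)$ is typical. Hence the small block has size $\tfrac{m-1}{2}$ and the large block size $\tfrac{m+1}{2}$, giving exactly the shape of the statement with $B_1\le 0$ and $B_3\ge 2$. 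Conversely, in that configuration $A^{\lin}=\bigl[\begin{smallmatrix}0&L\\ L^{T}&0\end{smallmatrix}\bigr]$ with $L$ a generic $\tfrac{m-1}{2}\times\tfrac{m+1}{2}$ matrix of linear forms, so $V(I_{m-1}(A^{\lin}))$ is the locus where $L$ fails to have maximal rank, a standard determinantal locus of codimension two (cf.\ Theorem \ref{thm:stand-det}); thus $\htt I_{m-1}(A^{\lin})=2$ and $I$ is componentwise linear. The hard part will be the structural input of the first step --- identifying the resolution of the submaximal minors of a symmetric matrix and verifying the radical equality that collapses Buchsbaum--Eisenbud to the single height condition; the staircase estimate and the correct use of the height-three hypothesis to break the symmetry between the two outer blocks are the other two places demanding care.
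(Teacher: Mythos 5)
Your first step is exactly the paper's Lemma \ref{lem:crit-symm}, and your necessity argument takes a genuinely different route from the paper's: the paper first extracts the constraints $d_1+d_m=d_1+d_{m-1}=1$ and $n_1+n_3=n_2$ by comparing Betti numbers with those of a stable ideal (Theorem \ref{thm:gin} plus Eliahou--Kervaire, applied to the grading of J\'ozefiak's resolution), and only then uses the linearization criterion to eliminate the remaining configurations; you instead read everything off from $\htt I_{m-1}(A^{\lin})\ge 2$ by the small/middle/large block analysis. That route does appear workable: the rank-one structure $\deg a_{ij}=d_i+d_j$ of the degree matrix forces the linearized off-diagonal block to be block diagonal after permutation, so its ideal of maximal minors is a product of ideals of minors of the blocks, at least one of which is principal or zero unless there is a single block with all entries of degree one; this makes your ``staircase'' claim provable, and your use of $\htt I_{m-1}(A)=3$ to decide which outer block is the larger one is also correct. (One caveat: drop the word ``generic'' for the middle block and argue for arbitrary ranks of $M$ and $L$; the case analysis still closes, but full rank of $M$ is not automatic.)

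The genuine gap is in the sufficiency direction. You assert that in the admissible configuration $A^{\lin}=\bigl[\begin{smallmatrix}0&L\\ L^{T}&0\end{smallmatrix}\bigr]$ the matrix $L$ is ``a generic $\frac{m-1}{2}\times\frac{m+1}{2}$ matrix of linear forms,'' so that its rank-dropping locus has codimension two. But $L$ is not generic: its entries are whatever linear forms occur in $A$, and for special choices the height drops --- for instance, if every entry of $L$ is a scalar multiple of one fixed linear form, then $\htt I_{(m-1)/2}(L)\le 1$, hence $\htt I_{m-1}(A^{\lin})=\htt\bigl(I_{(m-1)/2}(L)^{2}\bigr)\le 1$ and $I$ would not be componentwise linear. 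What must be shown is that the standing hypothesis $\htt I_{m-1}(A)=3$ rules out all such degenerations, i.e.\ forces $\htt I_{(m-1)/2}(L)=2$; this is precisely the paper's Lemma \ref{lem:symm-sufficient}, and it is not formal: the paper deletes the last row of $A$, invokes Gorla's theorem \cite[Theorem 1.22]{Go} to see that this lowers the height of the ideal of maximal minors by at most one, and then uses a Laplace expansion to show that every maximal minor of the truncated matrix lies in $I_{(m-1)/2}(L)$. Your proposal contains no substitute for this step, and unlike the other delicate points (which you rightly flag), this one is presented as if it were immediate; as written, the ``if'' direction of the theorem is unproven.
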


As preparation for the proof we derive some estimates for the height
of certain ideals. Define
\[
s := \max \{j \s d_j = d_1\}
\]
and
\[
t := \min \{j \s d_j = d_m\}.
\]
Note that $s < t$ if $d_1 < d_m$. We begin now with an upper bound.

\begin{lem}
  \label{lem:symm-necessary}
Assume that $A$ satisfies $d_1 \leq 0,\; d_m \geq 2,\; d_1 + d_m =
1$, and $s+t \in \{m, m+1\}$. If $t \geq s+2$, then the height of
$I_{m-1} (A^{\lin})$ is at most one.
\end{lem}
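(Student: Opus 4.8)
The plan is to read off the block structure of the linearization $A^{\lin}$ from the degree pattern, and then to bound the rank of $A^{\lin}$ after restricting to a suitable hypersurface. Since each entry $a_{ij}$ of $A$ is homogeneous of degree $d_i + d_j$ and lies in $\mm$, the $(i,j)$ entry of $A^{\lin}$ is nonzero only when $d_i + d_j = 1$: if $d_i + d_j \le 0$ the entry already vanishes, and if $d_i + d_j \ge 2$ it is killed by the linearization. I would group the indices into $L = \{1,\dots,s\}$ (where $d_j = d_1$), $H = \{t,\dots,m\}$ (where $d_j = d_m$), and the middle block $M = \{s+1,\dots,t-1\}$. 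The hypotheses $d_1 \le 0$, $d_m \ge 2$, $d_1 + d_m = 1$ then force: the $L$--$L$ block vanishes ($2d_1 \le 0$); the $H$--$H$ block vanishes ($2d_m \ge 4$); the $L$--$M$ block vanishes, since $d_1 + d_j < d_1 + d_m = 1$ for $j \in M$ makes the entry a degree-$\le 0$ element of $\mm$; the $M$--$H$ block vanishes, since $d_j + d_m > 1$ for $j \in M$; while the $L$--$H$ block consists of linear forms ($d_1 + d_m = 1$). Hence, after simultaneously permuting rows and columns to the order $(L,H,M)$, the matrix becomes block diagonal,
\[
A^{\lin} \sim \begin{bmatrix} W & 0 \\ 0 & Y \end{bmatrix}, \qquad W = \begin{bmatrix} 0 & X \\ X^T & 0 \end{bmatrix},
\]
where $X$ is the $s \times |H|$ block of linear forms from the $L$--$H$ positions and $Y$ is the symmetric middle block on $M$.

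The crucial point is that $W$ is anti-block-diagonal, so $\rank W = 2\,\rank X$ at every prime and is therefore always even. I would use $t \ge s+2$ to guarantee $M \neq \emptyset$, and then split according to the two admissible values of $s+t$. If $s+t = m$, then $|H| = s+1$ and $|M| = m-2s-1 \ge 1$, so $\rank W \le 2s$ identically; here I would pass to primes $\fp$ containing $\det Y$, over which $\rank Y \le |M|-1$, whence $\rank A^{\lin} \bmod \fp \le 2s + (|M|-1) = m-2$ and every $(m-1)$-minor lies in $\fp$. If $s+t = m+1$, then $|H| = s$, so $X$ is square and $W$ has even size $2s$; here I would use the primes containing $\det X$, over which $\rank X \le s-1$ forces $\rank W \le 2s-2$ and hence $\rank A^{\lin} \bmod \fp \le (2s-2) + |M| = m-2$. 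Intersecting over all such primes gives $I_{m-1}(A^{\lin}) \subseteq \Rad(\det Y)$, respectively $\Rad(\det X)$, so that $\htt I_{m-1}(A^{\lin}) \le \htt(\det Y) \le 1$, respectively $\le \htt(\det X) \le 1$. If the relevant determinant happens to be identically zero, then the generic rank of $A^{\lin}$ is already at most $m-2$, so $I_{m-1}(A^{\lin}) = 0$ and the bound holds trivially.

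I expect the main obstacle to be the degree bookkeeping that pins down exactly which blocks of $A^{\lin}$ survive, in particular the vanishing of the $L$--$M$ and $M$--$H$ blocks, which uses $d_1 < d_j < d_m$ together with $d_1 + d_m = 1$ and the integrality of degrees. Conceptually, the subtlety is that the two cases $s+t=m$ and $s+t=m+1$ must be handled by \emph{different} hypersurfaces: the parity $\rank W = 2\,\rank X$ is what makes the case $s+t=m+1$ work, because the single codimension-one condition $\det X = 0$ drops $\rank A^{\lin}$ by two at once, which is exactly what is needed since the generic rank there can be as large as $m$. The complementary case $s+t=m$ instead exploits the built-in corank of the rectangular block $W$, transferring the remaining required rank drop to the middle block $Y$.
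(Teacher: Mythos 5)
Your proof is correct and follows essentially the same route as the paper's: the identical block decomposition of $A^{\lin}$ (your $X$ and $Y$ are the paper's $A_1$ and $A_2$), the same case split according to whether the off-diagonal block is square (equivalently $s+t=m+1$ versus $s+t=m$), and the same choice of determinant as the relevant hypersurface in each case. The only difference is in the last step: where the paper checks that every $(m-1)$-minor is literally divisible by $\det A_1$ (resp.\ $\det A_2$), you show via a rank count that every $(m-1)$-minor lies in each prime containing $\det X$ (resp.\ $\det Y$), giving containment in the radical rather than in the principal ideal itself---an equally sufficient, and slightly more robust, way to get the height bound.
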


\begin{proof}
The assumptions imply that the linearization of $A$ has the form
\[
A^{\lin} =
\begin{bmatrix}
0 & 0   & A_1 \\[2pt]
0 & A_2 & 0   \\[2pt]
A_1^T & 0 & 0
\end{bmatrix},
\]
where $A_1$ is an $s \times (m-t+1)$ matrix and $A_2$ is a symmetric
$(t-s-1) \times (t-s-1)$ matrix. We distinguish two cases.
\smallskip

\noindent {\sf Case 1.} Assume that $m-t+1 = s$, i.e., $A_1$ is a
square matrix. Then we claim that each generator of $I_{m-1}
(A^{\lin})$ is divisible by $\det A_1$.

To this end we consider the maximal minors of $A^{\lin}$. Let $B$ be
the matrix obtained from $A^{\lin}$ by deleting row $i$ and column
$j$. By symmetry, it suffices to consider the case $i \leq j$. If $i
\leq s$ and $t \leq j$, then the block structure of $B$ provides
$\det B = 0$.  Otherwise, it follows that $\det A_1$ is a factor of
$\det B$ (where we include the possibility $\det B = 0$). This
establishes the claim of Case 1.
\smallskip

\noindent {\sf Case 2.} Assume that $A_1$ is not a square matrix,
that is, $m-t = s$. Then we claim that each generator of $I_{m-1}
(A^{\lin})$ is divisible by $\det A_2$.

Again this follows by carefully considering the maximal minors of
$A^{\lin}$. If $B$ is the square matrix obtained from $A^{\lin}$ by
deleting row $i$ and row $j$ such that at least one of $i$ and $j$
is in $\{s+1,\ldots,t\}$, then $\det B = 0$. In all other cases
$\det A_2$ is a factor of $\det A^{\lin}$. Our claim follows.
\smallskip

Thus, we have shown that in either case $\htt I_m (A^{\lin}) \leq
1$.
\end{proof}

We also need a lower bound.

\begin{lem}
  \label{lem:symm-sufficient}
Assume that $A$ is a symmetric $(2s+1) \times (2s+1)$ matrix of the
form
\[
A =
\begin{bmatrix}
0 & A_1 \\[2pt]
A_1^T & A_2
\end{bmatrix},
\]
where $A_1$ is an $s \times (s+1)$ matrix. If the ideal generated by
the maximal minors of $A$ has (expected) height three then the ideal
generated by the maximal minors of $A_1$ has (expected) height two.
\end{lem}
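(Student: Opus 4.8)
The plan is to compare the two heights directly. Write $I_{2s}(A)$ for the ideal of $2s\times 2s$ (submaximal) minors of the $(2s{+}1)\times(2s{+}1)$ matrix $A$ — this is the ideal under study — and $I_s(A_1)$ for the ideal of maximal ($s\times s$) minors of $A_1$. Since $A_1$ is an $s\times(s+1)$ matrix, the classical bound on heights of ideals of maximal minors (see, e.g., \cite{BrVe}) gives $\htt I_s(A_1)\le 2$. Thus the entire content is the reverse inequality $\htt I_s(A_1)\ge 2$, and I will deduce it from the hypothesis $\htt I_{2s}(A)=3$.

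First I would record one easy inclusion. If we delete a row with index $i$ and a column with index $j$, both in $\{s+1,\dots,2s+1\}$, then the surviving $2s\times 2s$ matrix has block form $\left[\begin{smallmatrix} 0 & B \\ C & D\end{smallmatrix}\right]$, where the upper–left block is the full $s\times s$ zero block of $A$, while $B$ is $A_1$ with column $j-s$ deleted and $C$ is $A_1^{T}$ with row $i-s$ deleted, both $s\times s$. Hence this minor equals $\pm\det B\cdot\det C=\pm\Delta_{j-s}\Delta_{i-s}$, a product of two maximal minors of $A_1$. Letting $i,j$ vary, these minors generate $I_s(A_1)^2$, so $I_s(A_1)^2\subseteq I_{2s}(A)$ and therefore $V(I_{2s}(A))\subseteq V(I_s(A_1))$.

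The heart of the proof is the estimate $\htt I_{2s}(A)\le \htt I_s(A_1)+1$. Set $h:=\htt I_s(A_1)$ and let $Z$ be a top–dimensional component of $W:=V(I_s(A_1))$, with generic point $\eta$ and residue field $\kappa(\eta)$. A short kernel count, using $\Image A_1^{T}=(\ker A_1)^{\perp}$ and that $A_2$ induces on $\ker A_1$ the symmetric form $\phi(v,w)=v^{T}A_2 w$, shows $\dim\ker A(\eta)=\dim\ker A_1(\eta)^{T}+\dim(\text{radical of }\phi)$. If the rank of $A_1$ at $\eta$ were at most $s-2$, then $\dim\ker A_1(\eta)^{T}\ge 2$, so $Z\subseteq V(I_{2s}(A))$ and $\htt I_{2s}(A)\le h\le 2<3$, contradicting the hypothesis; hence this rank is exactly $s-1$. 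In that case $\ker A_1$ is $2$-dimensional and $\dim\ker A=3-\rank\phi$, so $A$ drops below rank $2s$ precisely where the single form $\delta:=\det\phi$ vanishes. On the open locus $U_Z\subseteq Z$ where $\rank A_1=s-1$ the ideal $I_{2s}(A)$ is thus locally cut out by the one equation $\delta$, while on the complementary locus (where $\rank A_1\le s-2$) we have already seen $V(I_{2s}(A))\supseteq Z$-points. Because the entries of $A_2$ are forms of positive degree, $\delta$ is a nonconstant homogeneous form and in particular not a unit, so its vanishing locus on $U_Z$ is a nonempty Cartier divisor contained in $V(I_{2s}(A))$. This produces a prime over $I_{2s}(A)$ of height at most $h+1$, giving $\htt I_{2s}(A)\le h+1$. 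Combined with $\htt I_{2s}(A)=3$ and $h\le 2$, this forces $h=2$, as claimed.

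The step I expect to be the main obstacle is the globalization in the third paragraph. The form $\phi$ depends on a frame for $\ker A_1$, so $\delta=\det\phi$ is determined only up to the square of a unit, and I must argue that its degeneracy locus is a genuine nonempty codimension–one subscheme of $Z$ lying inside $V(I_{2s}(A))$. I would handle this in the graded category: choosing a homogeneous basis of the kernel module, $Z$ becomes the cone over a projective variety of dimension $\dim W-1\ge 0$ and $\delta$ is represented by a nonconstant homogeneous form lying in $\mm$, so its zero locus is automatically nonempty of codimension one; the containment in $V(I_{2s}(A))$ and the codimension count then follow from the local "one equation'' description of $I_{2s}(A)$ on $U_Z$ together with the fact that the rank–$\le s-2$ locus is already swallowed by $V(I_{2s}(A))$.
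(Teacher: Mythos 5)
Your overall strategy is legitimate, and several steps are correct: the upper bound $\htt I_s(A_1)\le 2$, the block computation $I_s(A_1)^2\subseteq I_{2s}(A)$ (though this inclusion gives an inequality in the unhelpful direction and is never actually used), the kernel count $\dim\ker A = \dim\ker A_1^T + \dim\operatorname{rad}\phi$ at a point, and hence the fact that the corank-$\ge 2$ locus of $A_1$ on $Z$ lies in $V(I_{2s}(A))$ while on the corank-one locus $U_Z$ the set $V(I_{2s}(A))$ is locally cut out by one regular function. The gap is exactly where you predicted it, and your proposed patch does not close it. The kernel of $\bar A_1$ over $R=S/P_Z$ is a graded module that is locally free of rank two on $U_Z$ but is \emph{not} free, so there is no ``homogeneous basis of the kernel module''; the best you can do is pick two homogeneous kernel elements $v,w$ that span generically, and then $\delta=\phi(v,v)\phi(w,w)-\phi(v,w)^2$ is a genuine homogeneous element of $R$. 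But the identification ``$\delta=0 \Leftrightarrow \operatorname{rank} A<2s$'' holds only on the open set where $v,w$ actually span the kernel, which excludes both the corank-$\ge 2$ locus and the (possibly codimension-one) degeneracy locus of the pair $(v,w)$. The cone argument then fails: the zero locus of $\delta$ on $Z$ is indeed nonempty (it contains the vertex, since all entries lie in $\mm$), but the vertex and, conceivably, \emph{every} component of $V_R(\delta)$ can lie in the region where the equivalence breaks down, so Krull's principal ideal theorem produces a height-$\le h+1$ prime that you cannot show contains $I_{2s}(A)$. In other words, the ``bad case'' in which $\phi$ is nowhere degenerate on $U_Z$ and $V(I_{2s}(A))\cap Z$ equals the corank-$\ge 2$ locus (of codimension $\ge 2$ in $Z$) is never excluded, and with it your central estimate $\htt I_{2s}(A)\le \htt I_s(A_1)+1$ remains unproven.

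For comparison, the paper gets the same ``height drops by at most one'' principle without any of this local analysis: let $B$ be $A$ with its last row deleted; a theorem of Gorla \cite[Theorem 1.22]{Go} (which uses the hypothesis that $I_{2s}(A)$ has the expected height three) gives $\htt I_{2s}(B)\ge \htt I_{2s}(A)-1\ge 2$, and Laplace expansion along the zero block shows every $2s$-minor of $B$ is a linear combination of $s$-minors of $A_1$, so $I_{2s}(B)\subseteq I_s(A_1)$ and $\htt I_s(A_1)\ge 2$. If you want to salvage your approach, you need a device playing the role of Gorla's theorem, i.e.\ a globally defined hypersurface inside $Z$ that is guaranteed to meet the locus where your one-equation description is valid; producing one is precisely the content you are missing.
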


\begin{proof}
Let $B$ be the matrix obtained from $A$ by deleting its last row.
Then, using our assumption on $I_{2s} (A)$,  \cite[Theorem 1.22]{Go}
provides
\[
\htt I_{2s} (B) \geq \htt I_{2s} (A) - 1 \geq 2.
\]
The block structure of $A$ implies that each $2s$-minor of $B$ is a
linear combination of $s$-minors of $A_1$. It follows that
\[
\htt I_s (A_1) \geq \htt I_{2s} (B) \geq 2.
\]
Hence we must have equality because the  height of $I_s (A_1)$ is at
most two.
\end{proof}

As further preparation, we establish a result that is analogous to
Proposition \ref{lem:crit-stand-det}.

\begin{lem}
  \label{lem:crit-symm}
Assume that the height of $I = I_{m-1} (A)$ is three.   Then $I$ is
componentwise linear if and only if the height of $I_{m-1}
(A^{\lin})$ is at least two.
\end{lem}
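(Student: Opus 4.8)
The plan is to mirror, step for step, the proof of Proposition \ref{lem:crit-stand-det}. The idea is to transport the question of componentwise linearity of $I$ to the acyclicity of the linear part $F_{\lpnt}^{\lin}$ of a minimal graded free resolution $F_{\lpnt}$ of $I$ by means of Theorem \ref{lem:linearization}, and then to decide this acyclicity with the Buchsbaum--Eisenbud criterion, reading off the relevant ideals of minors directly from the entries of $A$. The essential difference from the standard determinantal case is that $I = I_{m-1}(A)$ is \emph{not} resolved by the Eagon--Northcott complex; instead one must work with the explicit minimal free resolution of the ideal of submaximal minors of a symmetric matrix.

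First I would recall this resolution (see \cite{Go}). Since $\htt I = 3$ and $I$ is Cohen--Macaulay, $S/I$ has projective dimension three, so the minimal free resolution of $I$ has length two, say
\[
0 \to G_2 \xrightarrow{\ B_2\ } G_1 \xrightarrow{\ B_1\ } G_0 \to I \to 0 .
\]
Following the convention of Proposition \ref{lem:crit-stand-det}, $B_1$ and $B_2$ are the two syzygy matrices, i.e.\ the two highest differentials of the deleted resolution of $S/I$. The crucial structural input, to be extracted from the explicit construction of this resolution, is that each entry of $B_1$ and of $B_2$ is either zero or (up to sign) an entry of $A$, and that the construction is functorial in $A$. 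Granting this, replacing $A$ by $A^{\lin}$ turns $B_i$ into $B_i^{\lin}$; and since every nonzero entry of $B_i$ lies in $\mm$, the complex with differentials $B_1^{\lin}, B_2^{\lin}$ is exactly the linear part $F_{\lpnt}^{\lin}$ of $F_{\lpnt}$. Writing $r_i = \rank B_i$, the fact that $F_{\lpnt}$ resolves $I$ together with Theorem A2.10.b in \cite{E} yields $\Rad I_{r_i}(B_i) = \Rad I$, and the same theorem applied to $A^{\lin}$ gives $\Rad I_{r_i}(B_i^{\lin}) = \Rad I_{m-1}(A^{\lin})$ for $i = 1, 2$.

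Now I would apply the Buchsbaum--Eisenbud acyclicity criterion to the length-two complex $F_{\lpnt}^{\lin}$: it is acyclic if and only if the rank conditions hold and $\grade I_{r_i}(B_i^{\lin}) \geq i$ for $i = 1, 2$. Because $S$ is Cohen--Macaulay these grades equal the corresponding heights, and by the radical equalities they all equal $\htt I_{m-1}(A^{\lin})$; hence the binding condition is the one for $i = 2$, namely $\htt I_{m-1}(A^{\lin}) \geq 2$. Note that the bound is $2 = c-1$ with $c = 3$ precisely because we resolve $I$, whose resolution has length two, and not $S/I$. By Theorem \ref{lem:linearization}, $F_{\lpnt}^{\lin}$ is acyclic if and only if $I$ is componentwise linear, and this gives the claim.

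I expect the main obstacle to be the structural step: verifying that the differentials of the minimal resolution of the submaximal minors of a symmetric matrix have entries among the entries of $A$ and that the construction commutes with linearization. Unlike the Eagon--Northcott complex, this resolution is not self-dual and is combinatorially more delicate, so one must track the explicit maps carefully --- rather than merely invoking a generic-matrix specialization --- to be certain that $F_{\lpnt}^{\lin}$ really is the complex built from $A^{\lin}$ and that the ideals $I_{r_i}(B_i^{\lin})$ cut out the rank locus of $A^{\lin}$. A secondary point is checking the rank conditions in the Buchsbaum--Eisenbud criterion for the linearized complex; these hold automatically once the relevant ideals of minors are nonzero.
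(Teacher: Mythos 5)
Your proposal is correct and takes essentially the same route as the paper: the paper invokes J\'ozefiak's explicit minimal graded free resolution $F_A$ of $I_{m-1}(A)$ (the resolution you describe; note the correct reference is \cite{Jo}, not \cite{Go}), observes that this construction commutes with linearization, i.e.\ $({F}_{A})^{\lin} = {F}_{A^{\lin}}$, and then concludes via the Buchsbaum--Eisenbud criterion and Theorem \ref{lem:linearization} exactly as in Proposition \ref{lem:crit-stand-det}. The one blemish is your appeal to Theorem A2.10.b of \cite{E} for the radical equalities $\Rad I_{r_i}(B_i^{\lin}) = \Rad I_{m-1}(A^{\lin})$: that theorem concerns the Eagon--Northcott family attached to a map of free modules, not J\'ozefiak's complex, so this identification must instead be extracted from J\'ozefiak's construction --- a point you yourself flag as the main obstacle, and which the paper likewise leaves implicit by asserting that the argument of Proposition \ref{lem:crit-stand-det} carries over.
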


\begin{proof}
To each symmetric $m \times m$  matrix $A$, J\'ozefiak \cite{Jo}
provides a complex ${F}_{A}$ of length three that is a minimal
graded free resolution of  $I_{m-1} (A)$ if it has height three. The
construction of this complex commutes with passing to its linear
part, that is, $({F}_{A})^{\lin} = {F}_{A^{\lin}}$. Using Lemma
\ref{lem:linearization}, our claim now follows as in the proof of
Proposition \ref{lem:crit-stand-det}.
\end{proof}

We are ready to establish the main result of this section.

\begin{proof}[Proof of Theorem \ref{thm:symm}]
We begin with showing the necessity of the stated degree conditions.
So assume $I$ is componentwise linear. By \cite[Theorem 3.1]{Jo},
$I$ has a minimal graded free resolution of the form
\begin{eqnarray}
   \label{eq:res-symm}
&&\lefteqn{ 0 \to \bigoplus_{1 \leq i < j \leq m} S(-d_i - d_j - D) \to
(\bigoplus_{1 \leq i, j \leq m} S(-D - d_i + d_j))/S (-D)  }\\
 & &  \hspace*{6.5cm} \to
 \bigoplus_{1 \leq i \leq j \leq m} S (d_i + d_j - D) \to S \to S/I \to 0,
 \nonumber
\end{eqnarray}
where $D := 2 (d_1 + \ldots + d_m)$. Denote by $M_i$ the maximal
degree of a generator of the $i$th free module in this resolution.
Then we get
\[
M_1 = D - 2 d_1, \; M_2 = D + d_m - d_1,\; \text{and}\;
M_3 = D + d_m + d_{m-1}.
\]
Since $I$ is componentwise linear, Theorem \ref{thm:gin} together
with the Eliahou-Kervaire resolution \cite{ElKe} provides
\[
M_3 = M_2 + 1 = M_1 + 2.
\]
It follows that we must have
\[
1 = d_1 + d_m = d_1 + d_{m-1}.
\]
Thus,  in particular $t \leq m-1$. If $d_1 = \cdots = d_m =
\frac{1}{2}$, then we are done. Otherwise, we have
\[
d_1 = \cdots = d_s \leq 0
\]
and
\[
d_t = \cdots = d_m \geq 1.
\]
In particular, the above free resolution has at least three strands.
Now denote by $n_i$ the number of generators of degree $M_i$ of the
$i$th free module in the Resolution \eqref{eq:res-symm}. Then we get
\[
n_1 = \binom{s+1}{2},\; n_2 = s (m-t+1),\; \text{and}\;
n_3 = \binom{m-t+1}{2}.
\]
Using again that $I$ must have the same graded Betti numbers as a
 stable ideal, we get
\begin{equation}
  \label{eq:syzygy-relation}
\quad n_1 + n_3 =  n_2
\end{equation}
because in a strongly stable ideal the pure power of $x_1$ occurs
among the minimal generators of least degree, but the resolution of
$I$ has more than one strand.  Condition \eqref{eq:syzygy-relation}
is equivalent to
\[
s \cdot (m-t-s) = (m-t+1) \cdot (m-t-s).
\]
Hence, we have either $m = s+t$ or $m = s+t-1$. Therefore, Lemma
\ref{lem:symm-necessary} applies if $t \geq s+2$. Together with
Lemma \ref{lem:crit-symm} it shows that then $I$ cannot be
componentwise linear. We conclude that $t = s+1$. Using that $s+t
\in \{m, m+1\}$, we get
\[
s= \frac{m-1}{2}\; \text{and}\; t = \frac{m+1}{2} \quad
\text{if $m$ is odd}
\]
and
\[
s= \frac{m}{2} \; \text{and}\; t = \frac{m}{2} + 1 \quad
\text{if $m$ is even.}
\]
However, if $m$ is even, we see that the linearization of $A$ is of
the form
\[
A^{\lin} =
\begin{bmatrix}
0 & A_2 \\[2pt]
A_2^T & 0
\end{bmatrix},
\]
where $A_2$ is an $s \times s$ matrix. It follows that all
submaximal minors of $A^{\lin}$ are multiples of $\det A_2$, which
is a contradiction by Lemma \ref{lem:crit-symm}. Hence, we have
shown that $m$ is odd, $t = s+1 = \frac{m+1}{2}$, $d_1 \leq 0$, and
$d_m \geq 2$, as required.

To show the converse, assume that these degree conditions are
satisfied. If all entries of $A$ have degree one, then the minimal
graded free resolution of $I$ is linear, hence $I$ is componentwise linear.
Thus, it remains to consider the case where $d_1 \leq 0$. Since by
assumption all entries of $A$ are in $\mm$, entries of non-positive
degree must be zero. Thus, $A$ is of the form
\[
A =
\begin{bmatrix}
0 & A_2 \\[2pt]
A_2^T & A_3
\end{bmatrix},
\]
where $A_2$ is an $\frac{m-1}{2} \times \frac{m+1}{2}$ matrix whose
entries all have degree one and where the degree of each entry of
$A_3$ is at least two. Hence its linearization is
\[
A^{\lin} =
\begin{bmatrix}
0 & A_2 \\[2pt]
A_2^T & 0
\end{bmatrix}.
\]
Denote by $J$ the ideal that is generated by the maximal minors of
$A_2$. Then the above block structure implies
\[
 I_{m-1}(A^{\lin}) = J^2.
\]
By Lemma \ref{lem:symm-sufficient}, the ideal $J$ has height two. It
follows that $\htt I_{m-1}(A^{\lin}) = 2$. Now we conclude by Lemma
\ref{lem:crit-symm} that $I$ is componentwise linear, which finishes
the proof.
\end{proof}

\begin{rem}
  \label{rem:strands-symm}
Theorem \ref{thm:symm} provides that if $I$ is componentwise linear,
then its minimal graded free resolution has either one or three
strands.
\end{rem}

As in the previous section we conclude with determining a strongly
stable ideal with the same graded Betti numbers as the given
componentwise linear ideal. As a preparation we need the following
observation.

\begin{lem}
  \label{lem:stable-ideal-for-symm}
Given any positive integer $s$, there are unique integers $t$ and
$r$ such that $t \ge -1,\; 0 \le r \le 2s - 2 - t$, and
\[
 r + \sum_{i = 2s - t}^{2s} i = \binom{s}{2}.
\]
Moreover, if $s \ge 2$, then $t \le s-3$.
\end{lem}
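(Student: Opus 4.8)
The plan is to reduce both the existence–uniqueness assertion and the bound $t \le s-3$ to the monotonicity of a single explicit function. First I would replace the sum by its closed form: writing
\[
f(t) := \sum_{i=2s-t}^{2s} i = \frac{(t+1)(4s-t)}{2} \qquad (t \ge -1),
\]
with the convention that $f(-1)=0$ because the sum is then empty. The only structural fact I need about $f$ is the first-difference identity
\[
f(t+1) - f(t) = 2s - t - 1,
\]
which is immediate and which shows that $f$ is strictly increasing on the range $-1 \le t \le 2s-2$.

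Next I would note that the displayed equation forces $r = \binom{s}{2} - f(t)$, so the entire problem is to pin down $t$. For an integer $r$ of this form, the two inequalities $0 \le r$ and $r \le 2s-2-t$ are equivalent, respectively, to $f(t) \le \binom{s}{2}$ and to $\binom{s}{2} < f(t+1)$; the second equivalence uses the difference identity, since $f(t) + (2s-2-t) = f(t+1) - 1$ and all quantities are integers. Thus admissible pairs $(t,r)$ correspond exactly to integers $t \ge -1$ satisfying
\[
f(t) \le \binom{s}{2} < f(t+1).
\]
Any such $t$ has $f(t) < f(t+1)$, hence $t \le 2s-2$, so it automatically lies in the increasing range. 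Since $f$ is strictly increasing on $[-1,2s-2]$ with $f(-1)=0 \le \binom{s}{2}$ and $f(2s-1) = s(2s+1) > \binom{s}{2}$, there is exactly one such $t$ (equivalently, $t+1$ is the least index at which the strictly increasing sequence $f(-1) < f(0) < \cdots$ exceeds $\binom{s}{2}$), and then $r$ is determined. This yields existence and uniqueness simultaneously.

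For the final assertion I would simply evaluate $f$ at $t = s-2$, which lies in the increasing range once $s \ge 2$, and compare:
\[
f(s-2) = \frac{(s-1)(3s+2)}{2} > \frac{s(s-1)}{2} = \binom{s}{2},
\]
the inequality reducing to $3s+2 > s$ after cancelling the positive factor $s-1$. Since the admissible $t$ satisfies $f(t) \le \binom{s}{2} < f(s-2)$ and $f$ is strictly increasing, monotonicity forces $t < s-2$, that is, $t \le s-3$.

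None of these steps is genuinely hard; the only points that require care are the empty-sum convention at $t=-1$ (so that the case $s=1$, where $t=-1$ and $r=0$, is not spuriously excluded) and the verification that every admissible $t$ lands in the range on which $f$ is strictly increasing. This last point is precisely what makes the difference identity usable, and it is the common engine behind both the uniqueness and the bound $t \le s-3$.
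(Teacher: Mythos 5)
Your proof is correct and takes essentially the same route as the paper's: both pin down $t$ by the threshold characterization (the paper declares $t$ to be the largest integer $j$ with $\sum_{i=2s-j}^{2s} i \le \binom{s}{2}$, which is exactly your condition $f(t) \le \binom{s}{2} < f(t+1)$ once monotonicity is noted), and both obtain $t \le s-3$ by comparing the sum at a fixed index against $\binom{s}{2}$. The only cosmetic difference is that you evaluate at $s-2$ with a strict inequality, which handles all $s \ge 2$ uniformly, whereas the paper evaluates at $s-3$ for $s \ge 3$ and disposes of $s=2$ separately; your write-up also makes explicit the monotonicity-range check that the paper leaves implicit.
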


\begin{proof}
Observe that $t$ has to be  the largest integer $j$ such that
\[
\sum_{i = 2s - j}^{2s} i \le \binom{s}{2}.
\]
This provides existence and uniqueness of $t$ and $r$.

Furthermore, if $s \ge 3$, then $\sum_{i = s+3}^{2s} i \ge
\binom{s}{2}$. Noting also that $t = -1$ if $s=2$, the final claim
follows.
\end{proof}

\begin{prop}
  \label{prop:stable-ideal-for-symm}
Adopt the assumptions of Theorem \ref{thm:symm}. Set $e := 2 d_m =
\deg a_{m,m}$ and $s = \frac{m-1}{2}$. Let $t, r$ be the integers
such that $t \ge -1,\; 0 \le r \le 2s - 2 - t$, and
\[
 r + \sum_{i = 2s - t}^{2s} i = \binom{s}{2}.
\]
If  $I$ is componentwise linear, then $I$ has the same graded Betti
numbers as the strongly stable ideal $(x_1, x_2, x_3)^{m-1}$,
provided $e = 1$, and as
\begin{eqnarray*}
(x_1, x_2)^{2s-1-t} (x_1, x_2, x_3)^{t+1}
+ x_1^{2s - 1 - t - r} (x_1, x_2)^{r-1} x_3^{t+2}
+ (x_1, x_2)^{2s-2-t-r} x_2^r x_3^{e+t-1} \\
+ (x_1, x_2)^s (x_1, x_2, x_3)^{s-3-t} x_3^{e+t-2}
+ (x_1, x_2, x_3)^{s-1} x_3^{2e + s -1},
\end{eqnarray*}
 where $(x_1, x_2)^{r-1}$ is defined to be zero if
$r=0$, whenever $e \ge 2$.
\end{prop}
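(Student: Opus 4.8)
The plan is to imitate the proof of Proposition~\ref{prop:stable-ideal-for-stand-det}: write down the candidate ideal, check that it is strongly stable, and then verify that $I$ and the monomial ideal have identical graded Betti numbers by comparing two explicit minimal free resolutions. For $I$ the natural resolution is J\'ozefiak's complex \eqref{eq:res-symm}, while for the monomial ideal one uses the Eliahou--Kervaire formula \eqref{eq:ElKe-formula}.

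First I would extract the graded Betti numbers of $I$ from \eqref{eq:res-symm}, using the degree data established in the proof of Theorem~\ref{thm:symm}. In the componentwise linear case the $d_i$ take only the two values $d_1=\cdots=d_s$ and $d_{s+1}=\cdots=d_m$, with $s=\frac{m-1}{2}$, $d_1+d_m=1$, and $e=2d_m$. Hence each of the sums $d_i+d_j$ and differences $d_i-d_j$ occurring in \eqref{eq:res-symm} takes only finitely many values, and all three free modules split into exactly three internal degrees. Writing $D=2(d_1+\cdots+d_m)=(m-1)+e$, one finds that the generators of $I$ sit in the three equally spaced degrees $D-e=m-1$, $D-1$, and $D+e-2$, and more generally that the Betti table of $S/I$ consists of three linear strands whose nonzero entries are the binomial coefficients $\binom{s}{2}$, $\binom{s+1}{2}$, $\binom{s+2}{2}$ together with the products $s(s+1)$ and $s^2+(s+1)^2-1$. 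This step is purely mechanical.

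If $e=1$, then all entries of $A$ have degree one, so J\'ozefiak's resolution is linear and $I$ is generated in degree $m-1$; comparing its Betti numbers directly with the Eliahou--Kervaire numbers of the strongly stable ideal $(x_1,x_2,x_3)^{m-1}$ disposes of this case. For $e\ge 2$ I would next confirm that the displayed ideal $J$ is strongly stable by checking the exchange property $x_j\cdot\frac{u}{x_i}\in J$ (for $j\le i$ and $x_i\mid u$) on its minimal generators; note that, although several of the five summands contain pure powers of $x_3$ and so are not themselves strongly stable, their sum is. Lemma~\ref{lem:stable-ideal-for-symm} guarantees that the auxiliary integers $t$ and $r$ entering the description of $J$ are well defined.

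The heart of the argument, and the main obstacle, is to show that the Eliahou--Kervaire numbers of $J$ reproduce the three-strand table of $I$ computed in the first step. I would tabulate the minimal monomial generators of $J$ by their degree $j$, record for each its largest-variable index $\max(\cdot)$, and then evaluate $\beta^S_{i,i+j}(S/J)=\sum_{\deg m_k=j}\binom{\max(m_k)-1}{i-1}$. The delicate point is the lowest strand: its third syzygies must total $\binom{s}{2}$, and the way $t$ and $r$ split the initial-degree generators -- governed precisely by the identity $r+\sum_{i=2s-t}^{2s} i=\binom{s}{2}$ of Lemma~\ref{lem:stable-ideal-for-symm} -- is exactly what makes the relevant sum of terms $\binom{\max-1}{2}$ equal $\binom{s}{2}$, while simultaneously forcing the intermediate counts $s(s+1)$ and $s^2+(s+1)^2-1$ to come out right. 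Once the generators of $J$ are correctly enumerated the remaining verification is a finite calculation; the real difficulty is the bookkeeping over the five summands and confirming that this calibrating identity does its job.
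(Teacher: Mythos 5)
Your proposal is correct and follows essentially the same route as the paper's own proof: the paper also disposes of $e=1$ directly, writes out J\'ozefiak's resolution \eqref{eq:res-symm} with the degree data forced by Theorem \ref{thm:symm} (obtaining the same three-strand Betti table, whose middle entry $s^2+(s+1)^2-1$ the paper records as $2s(s+1)$), and then matches these numbers against the Eliahou--Kervaire resolution of the displayed ideal, checking strong stability at the end. Your account is in fact more explicit than the paper's ``one sees''/``one checks,'' in particular in identifying how the identity $r+\sum_{i=2s-t}^{2s}i=\binom{s}{2}$ of Lemma \ref{lem:stable-ideal-for-symm} calibrates the lowest strand.
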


\begin{proof}
If $e = 1$, then the claims are clear. Thus, let $e \ge 2$. Then, by
Theorem \ref{thm:symm}, the minimal graded free resolution
\eqref{eq:res-symm} of the ideal $I$ takes the following form
\begin{equation*}
  0 \to F_3 \to F_2 \to F_1 \to I \to 0,
\end{equation*}
where
\begin{eqnarray*}
F_1 = R^{\binom{s+2}{2}} (-m+1) \oplus R^{s (s+1)} (-m+2 -e)
\oplus R^{\binom{s+1}{2}} (-m+3-2e), \\[1ex]
F_2 = R^{s (s+1)}(-m) \oplus R^{2s (s+1)} (-m+1-e)
  \oplus R^{s (s+1)} (-m+2-2e),
\end{eqnarray*}
and
\[
F_3 = R^{\binom{s}{2}} (-m-1) \oplus R^{s (s+1)} (-m -e)
\oplus R^{\binom{s+1}{2}} (-m+1-2e).
\]
Using the Eliahou-Kervaire resolution, one sees that $I$ has the
same graded Betti numbers as the monomial ideal, given in the
statement. Furthermore, one checks that this monomial ideal is
strongly stable.
\end{proof}

%
%

\end{document}